\newcommand{\supp}{\mathrm{supp}}% Support of the measure
\newcommand{\ext}{\mathrm{ext}}
\newcommand{\inte}{\mathrm{int}}
\renewcommand{\Re}{\operatorname{Re}} % Real part
\DeclareMathOperator{\Var}{Var}
\providecommand{\keywords}[1]{\textbf{Keywords} #1}
\let\olddet\det
\renewcommand{\det}{\olddet\nolimits} % Determinant with right limits
\newtheorem{definition}{Definition}[section]
\newtheorem{assumption}[definition]{Assumption}
\newtheorem{theorem}[definition]{Theorem}
\newtheorem{lemma}[definition]{Lemma}
\newtheorem{proposition}[definition]{Proposition}
\newtheorem{problem}[definition]{Problem}
\newenvironment{remark}[1][Remark]{\begin{trivlist}
\item[\hskip \labelsep {\bfseries #1}]}{\end{trivlist}}
\newenvironment{proof}%
{\rm \trivlist \item[\hskip \labelsep{\bf Proof. }]}%
{\hspace*{\fill}$\Box$\endtrivlist}
{\rm \trivlist \item[\hskip \labelsep{\bf Proof}]}%
{\hspace*{\fill}$\Box$\endtrivlist}
\numberwithin{equation}{section}
\def\Xint#1{\mathchoice
{\XXint\displaystyle\textstyle{#1}}%
{\XXint\textstyle\scriptstyle{#1}}%
{\XXint\scriptstyle\scriptscriptstyle{#1}}%
{\XXint\scriptscriptstyle\scriptscriptstyle{#1}}%
\!\int}
\def\XXint#1#2#3{{\setbox0=\hbox{$#1{#2#3}{\int}$}
\vcenter{\hbox{$#2#3$}}\kern-.5\wd0}}
\def\dashint{\Xint-}
\title{Universality for conditional measures of the sine point process}
\author[1]{Arno B.J. Kuijlaars}
\author[2]{Erwin Mi\~{n}a-D\'{\i}az}
\affil[1]{Katholieke Universiteit Leuven, Department of Mathematics,
Celestijnenlaan~200B box 2400, 3001 Leuven, Belgium.
E-mail:~{\tt arno.kuijlaars@kuleuven.be}
%URL: http://perswww.kuleuven.be/\~u0017946/
}
\affil[2]{The University of Mississippi,
Department of Mathematics,
Hume Hall 305,
P.~O.~Box 1848,
University, MS 38677-1848, USA.
E-mail:~{\tt minadiaz@olemiss.edu}}
\date{\today}                     %% if you don't need date to appear
\date{Journal of Approximation Theory 243 (2019), 1--24}
\begin{document}
\maketitle

\begin{abstract}
The sine process is a rigid point process on the real line, which
means that for almost all configurations $X$, the number of points
in an interval $I = [-R,R]$ is determined by the points
of $X$ outside of $I$. In addition, the points in $I$ are an
orthogonal polynomial ensemble on $I$ with a weight function that
is determined by the points in $X \setminus I$. We
prove a  universality result  that in particular implies that the 
correlation kernel of the orthogonal polynomial ensemble
tends to the sine kernel as the length $|I|=2R$ tends to infinity, thereby
answering a question posed by A.I. Bufetov.
\end{abstract}

\keywords{
Determinantal point processes;\ conditional measures; \ sine kernel; \ universality limits; \ orthogonal polynomials
}

\section{Introduction and statement of results}

\subsection{Introduction}
The aim of this paper is to prove a universality result for determinantal
point processes on the real line.
The result is that certain correlation kernels, associated with orthogonal
polynomial ensembles, tend to the
universal sine kernel 
\begin{equation} \label{def:sinekernel}
	 \frac{\sin \pi(x-y)}{\pi(x-y)},  
	 \end{equation}
see Theorems \ref{mainthm} and \ref{mainthm:wR} for the precise statements.
Such limiting results of orthogonal polynomial ensembles are well-known in 
the theory of random matrices, see e.g.\ \cite{DKMVZ1, KMVV, Lubinsky2016}. 
The  kernel \eqref{def:sinekernel} is the typical limiting  kernel for bulk eigenvalue 
correlations of 
random Hermitian matrices, also beyond invariant ensembles \cite{Erdos2013}.
The sine kernel  is the correlation kernel
for a translation invariant determinantal point process $\mathbb P_{\sin}$
on $\mathbb R$, normalized in such a way that neighboring points
have distance $1$ on average. 

The problem we address is motivated by recent studies on the sine  process
and other determinantal processes by Bufetov \cite{Bufetov16}. The 
starting point of \cite{Bufetov16} is the quasi-invariance of point processes,
originating from the work of Olshanski \cite{Olshanski11},
combined with the number rigidity of the sine process in the sense 
of Ghosh and Peres  \cite{Ghosh, GhoshPeres}.
The number rigidity (or simply rigidity) of a 
locally finite point process $\mathbb P$ on the real line
means the following. For any compact interval $I$, and for 
$\mathbb P$-almost all point configurations $X$, it is true that
$X \setminus I$  determines the number of points in $X \cap I$ almost surely.
Thus, by observing the points outside of $I$, one can deduce the number of
points inside $I$, with probability one. Number rigidity is a very 
remarkable property, which makes the sine process very different from 
a Poisson point process. Indeed, in a Poisson point process, the points in $I$ 
and those in $I^c = \mathbb R \setminus I$ are independent
and so from knowing the points  in $I^c$ one gains no extra information
about the points in $I$. For more examples of rigid point processes,
see \cite{Bufetov16-1,BufetovDabrowskiQiu15,BufetovShirai17,
GhoshKrishnapur}.

Number rigidity can be expressed in terms of conditional measures.
Let $I$ again be a compact interval, and let $Y$ be a locally
finite configuration of points in $I^c$.  The conditional measure
$\mathbb P( \cdot \mid Y; I^c)$ of a point process $\mathbb P$
is a new point process  that is supported on configurations $X$
with $X \cap I^c = Y$. Informally, it is the point process obtained
from $\mathbb P$ by conditioning that the points outside of $I$ coincide
with $Y$. See \cite{Bufetov16} or \cite[Chapter 5, Section 8]{Parth67} 
for a  precise description.  For a rigid point process $\mathbb P$, and 
for $\mathbb P$-almost all $X$ the conditional measure 
$\mathbb P(\cdot \mid X \setminus I; I^c)$ is identified with a
point process on $I$ with exactly $N = \#(X \cap I)$ points.

For a large class of rigid determinantal point processes 
$\mathbb P$ on $\mathbb R$, including the sine process,
Bufetov \cite[Theorems 1.1 and 1.4]{Bufetov16} showed that, 
for $\mathbb P$ almost all $X$, the conditional 
measure $\mathbb P(\cdot \mid X \setminus I; I^c)$, when considered
as a point process on $I$, has a joint density on $I^N$ of the form
\begin{equation} \label{eq:OPdensity} 
	\frac{1}{Z_{I,X}} \prod_{j<k} (t_k-t_j)^2 \prod_{j=1}^N \rho_{I,X}(t_j),
	\qquad N = \#(X \cap I), 
	\end{equation} 
for certain functions $\rho_{I,X}$ that in addition to $I$ and $X$ also 
depend on the point process.
The arguments leading to \eqref{eq:OPdensity} are based on  
quasi-invariance properties \cite{Bufetov14, Olshanski11} of the point
processes.
For the sine process these functions are such that, 
see \cite[Corollary 1.5]{Bufetov16}
\[ \frac{\rho_{I,X}(t_1)}{\rho_{I,X}(t_2)} = 
	\lim_{R \to \infty}  \prod_{p \in X \setminus I, \, |p| \leq R} 
		\left(  \frac{t_1-p}{t_2-p} \right)^2,
		\qquad t_1, t_2 \in I. 
	\]
Hence, one might take 
\begin{equation} \label{eq:rhoIX} 
	\rho_{I,X}(t) =  
	\prod_{p \in X \setminus I} \left( 1 - \frac{t}{p} \right)^2,
		\qquad t \in I, \end{equation}
with a product that converges in principal value.

A joint density of the form \eqref{eq:OPdensity} is determinantal with
a correlation kernel that is built out of the orthogonal polynomials 
for the weight $\rho_{I,X}$ on $I$. Namely, if $(\varphi_j)_{j=0}^{\infty}$
is the sequence of orthonormal polynomials, i.e., $\deg \varphi_j = j$ and
\[ \int_I \varphi_j(t) \varphi_k(t) \rho_{I,X}(t) dt = \delta_{j,k}, \]
then the kernel is
\begin{equation} \label{eq:kernelKIX} 
	K_{I,X}(x,y) = \sqrt{\rho_{I,X}(x) \rho_{I,X}(y)}
	\sum_{j=0}^{N-1} \varphi_j(x) \varphi_j(y). 
	\end{equation}

If we now take $I$ centered at the origin, it will be reasonable to expect  
that the effect of conditioning on $X \setminus I$ becomes less 
important as the length of $I$ tends to infinity. In the limit $|I| \to \infty$
we expect to recover the kernel of the point process $\mathbb P$.
For the sine process this was stated as an open problem by
A.I. Bufetov \cite{BufetovMay16} to one of us. 
\begin{problem}[Bufetov] \label{prob:Bufetov}
For a locally finite configuration $X$ on $\mathbb R$, and for an
interval $I$, let $K_{I,X}$ be the orthogonal polynomial kernel 
\eqref{eq:kernelKIX} associated 
with the weight  function \eqref{eq:rhoIX}.
Is it then true that for $\mathbb P_{\sin}$-almost all point configurations $X$ we 
have that
\[ K_{I,X}(x,y) \to   \frac{\sin \pi(x-y)}{\pi(x-y)} \]
 as $I = [-R,R]$ and $|I| \to \infty$?
\end{problem}

We are going to answer Problem \ref{prob:Bufetov} in the affirmative 
in a more general,
deterministic setting, that we will explain next.

\subsection{Notation and statement of main results} 

Our results deal with a fixed, deterministic set of points
$ X = \{ p_n \mid n \in \mathbb Z \}$.
Our assumptions on $X$ are
\begin{assumption} \label{assumptions}
\begin{enumerate}
\item[(a)] $(p_n)_{n \in \mathbb Z}$ is a strictly increasing doubly infinite sequence, indexed so that
\begin{equation} \label{eq:pnnumbering}
\cdots < p_{-2} < p_{-1} < 0 \leq p_0 < p_1 < \cdots
\end{equation}
\item[(b)] the series $\sum p_n^{-1}$ converges in principal value, that is, 
\begin{align}\label{convergence:pv}
\lim_{S\to\infty}\sum_{0< |p_n|<S}\frac{1}{p_n} \quad \text{ exists},
\end{align}
\item[(c)] and
\begin{align}\label{limit:pn/n}
\lim_{n\to\pm\infty}\frac{p_n}{n}=1.
\end{align}
\end{enumerate}
\end{assumption}
The convergence in principal value \eqref{convergence:pv} 
is then (in our situation, assuming \eqref{eq:pnnumbering} and \eqref{limit:pn/n}) 
the same as saying that
\begin{equation} \label{convergence:pv2} 
	\sum_{n=1}^{\infty} \left( \frac{1}{p_n} + \frac{1}{p_{-n}} \right) 
	\qquad \text{ converges}. 
	\end{equation}

The assumptions are satisfied for a typical configuration $X$ from
the sine process, since the sine process is a simple point process, and
if $X = \{ p_n \mid n \in \mathbb Z\}$ with $(p_n)$ as in \eqref{eq:pnnumbering} 
then it is known that (see Appendix \ref{appendixB}) 
\begin{equation} \label{eq:pnsineprocess} 
	p_n = n + O(|n|^{1/2} \log^2 |n|) \qquad \text{ as } n \to \pm \infty 
	\end{equation}
holds $\mathbb P_{\sin}$-almost surely. 
It is easy to show that \eqref{eq:pnnumbering} and \eqref{eq:pnsineprocess} 
imply  \eqref{convergence:pv} and \eqref{limit:pn/n}.

For a non-negative  weight function $w$ on $\mathbb{R}$ 
such that $\int |t|^{n} w(t)dt<\infty$ for all $n\ge 0$, we denote 
by $\left(\varphi_j( \cdot; w)\right)_{j=0}^\infty$ the sequence of orthonormal 
polynomials with respect to $w$. That is, $\varphi_j( \cdot; w)$ is a 
polynomial of degree $j$ with positive leading coefficient, and 
\[
\int \varphi_j( t; w)\varphi_k(t; w) w(t)dt=\delta_{jk}.
\]

The  kernel functions associated to $w$ are  defined by 
\begin{equation} \label{kernelKn} 
K_N(x,y;w) = \sqrt{w(x) w(y)} \sum_{j=0}^{N-1} \varphi_{j}(x;w) \varphi_j(y;w),
\quad N\geq 1. 
\end{equation}
In the proof of Theorem \ref{mainthm:wR} we will also consider the 
polynomial kernels
\begin{align}\label{kernelwithoutweights}
\widehat{K}_N(x,y;w) = \sum_{j=0}^{N-1} \varphi_{j}(x;w) \varphi_j(y;w),
\end{align}
that do not include the square root of the weight factors. 

With $X$ being fixed and $I = [-R,R]$, the weight \eqref{eq:rhoIX}
is equal to   
\begin{align} \label{eq:rhoR}
	\rho_R(t) & = \prod_{|p_n| > R} \left(1 - \frac{t}{p_n}\right)^2 \\	
	& = \lim_{S\to\infty}\prod_{R < |p_n|<S}\left(1-\frac{t}{p_n}\right)^2.
	\nonumber
\end{align}
The limit exists because of the assumption \eqref{convergence:pv}.
Because of \eqref{limit:pn/n} we have $N(R)/R \to 2$ as $R \to \infty$. 
The main result of this paper is the following universality theorem,
that also answers the Problem \ref{prob:Bufetov} of Bufetov.
\begin{theorem}\label{mainthm}
Let $(p_n)_{n \in \mathbb Z}$ be a doubly infinite sequence satisfying
Assumption \ref{assumptions}.  Let 
\begin{equation} \label{eq:defNR}
	N(R):= \# \{ p_n \mid |p_n| \leq R \}.
\end{equation}
Then we have  
\[ 
\lim_{R\to\infty}K_{N(R)} (x,y;\rho_R)= \frac{\sin \pi (x-y)}{\pi(x-y)} 
\]
uniformly for $x$ and $y$ in compact subsets of the real line.
\end{theorem}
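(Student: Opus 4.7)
The plan is to reinterpret $K_{N(R)}(x,y;\rho_R)$ as the reproducing kernel of a nested sequence of finite-dimensional subspaces of the Paley--Wiener space $PW_\pi$, and then show these subspaces exhaust $PW_\pi$ in the limit $R\to\infty$. Since $\sin\pi(x-y)/(\pi(x-y))$ is precisely the reproducing kernel of $PW_\pi$ with the Lebesgue inner product on $\R$, pointwise convergence of the kernels will follow from general Hilbert-space reasoning, and local uniform convergence from a normal families argument.

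I would first introduce the entire function
\begin{equation*}
F(t) := \lim_{S\to\infty}\prod_{|p_n|<S}\left(1-\frac{t}{p_n}\right),
\end{equation*}
whose existence is guaranteed by Assumption \ref{assumptions}(b). The regularity $p_n/n\to 1$ forces the zero counting function to satisfy $N(r)=2r+o(r)$, and via Lindel\"of's theorem on canonical products of order one this pins the exponential type of $F$ at exactly $\pi$; separately I would verify that $F$ is real and bounded on $\R$. The factorization $F=P_R\Phi_R$, with $P_R(t)=\prod_{|p_n|\le R}(1-t/p_n)$ a polynomial of degree $N(R)$, yields $\rho_R=\Phi_R^2$. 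Setting
\begin{equation*}
V_R := \{\Phi_R\cdot Q \,:\, Q\in\mathcal{P}_{N(R)-1}\},
\end{equation*}
the type bound for $F$ together with the $O(|t|^{-1})$ decay at infinity (from $\deg Q<\deg P_R$ and boundedness of $F$) gives $V_R\subset PW_\pi$, while the identity $\Phi_R=\Phi_{R'}\prod_{R<|p_n|\le R'}(1-t/p_n)$ shows $V_R\subset V_{R'}$ for $R\le R'$. The multiplication $Q\mapsto \Phi_R Q$ is an isometry from $(\mathcal{P}_{N(R)-1},\langle\cdot,\cdot\rangle_{\rho_R})$ onto $V_R\subset L^2(\R)$; for $x,y$ in a fixed compact and $R$ large enough that $\Phi_R$ has no sign change there, this isometry identifies $K_{N(R)}(\cdot,\cdot;\rho_R)$ with the reproducing kernel of $V_R$ in $L^2(\R)$.

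The crux is proving that $\bigcup_R V_R$ is dense in $PW_\pi$. A partial fraction calculation identifies $V_R$ with the span of the Lagrange-type functions
\begin{equation*}
\ell_k(t):=\frac{F(t)}{F'(p_k)(t-p_k)}, \qquad |p_k|\le R,
\end{equation*}
so density is equivalent to completeness of $\{\ell_k\}_{k\in\mathbb{Z}}$ in $PW_\pi$. I would attack this by approximating an arbitrary $f\in PW_\pi$ by its partial Lagrange sum $T_R f:=\sum_{|p_k|\le R}f(p_k)\ell_k\in V_R$; the difference $f-T_R f$ vanishes at every $p_k$ with $|p_k|\le R$, so $(f-T_R f)/P_R$ is entire, and a contour integral representation using that both $f$ and $F$ have exponential type $\pi$ (so $f/F$ has subexponential growth off $\R$) should give $T_R f\to f$ in $L^2$. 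Once density is in hand, the formula $K_{N(R)}(x,y;\rho_R)=(P_{V_R}\mathcal{S}(\cdot,y))(x)$, with $\mathcal{S}(x,y)=\sin\pi(x-y)/(\pi(x-y))$, gives pointwise convergence to $\mathcal{S}(x,y)$; the diagonal bound $K_{N(R)}(x,x;\rho_R)\le\mathcal{S}(x,x)=1$ together with Paley--Wiener equicontinuity promotes this to uniform convergence on compacts via Arzel\`a--Ascoli.

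The main obstacle will be the density step: since $p_n/n\to 1$ permits perturbations of the integer lattice far exceeding Kadec's $1/4$, the system $\{\ell_k\}$ need not be a Riesz basis of $PW_\pi$, and no off-the-shelf sampling theorem will do. However, density only demands that the partial sums $T_R f$ converge---a substantially weaker statement than a full interpolation formula---and this should follow from a careful contour-integral estimate carried out on horizontal lines where $|F|$ admits sharp two-sided bounds in terms of $|\sin\pi z|$. A secondary technical issue is the boundedness of $F$ on $\R$ used in the $L^2$ containment of Step 2; this should follow from Hadamard-type factorization together with the regularity of the zeros $p_n$, but requires an explicit argument.
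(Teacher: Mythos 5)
Your proposal takes a genuinely different route from the paper --- a Paley--Wiener / de Branges reproducing-kernel argument rather than the paper's sandwiching by exponential comparison weights and Riemann--Hilbert universality for those comparison weights --- but it has a structural gap that I do not see how to repair within the framework you describe.

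\textbf{The interval issue.} The kernel $K_{N(R)}(x,y;\rho_R)$ in Theorem \ref{mainthm} (which is Bufetov's kernel \eqref{eq:kernelKIX}) is built from polynomials orthogonal on the compact interval $I=[-R,R]$, not on $\R$: the weight $\rho_{I,X}$ in \eqref{eq:OPdensity}--\eqref{eq:kernelKIX} lives on $I$, and the rescaled weight $w_R$ in the paper is implicitly supported on $[-1,1]$ (this is forced by the $(1-t^2)^{\pm 1/2}$ and $\chi_{[-\beta,\beta]}$ factors in \eqref{eq:wRplus}--\eqref{eq:wRminus}, by the Christoffel function comparison in \eqref{eq:CFineq}, and by the fact that Theorem \ref{mainthm:wR} allows $N > N(R)$, for which moments of $\rho_R$ on $\R$ of order $\geq 2N(R)-1$ diverge). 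Your identification of $K_{N(R)}(\cdot,\cdot;\rho_R)$ with the reproducing kernel of $V_R=\Phi_R\cdot\mathcal P_{N(R)-1}$ inside $L^2(\R)$ relies on
\[
\int_{-R}^{R} Q_1Q_2\,\rho_R\,dt
\;=\;
\int_{\R}\Phi_R Q_1\cdot\Phi_R Q_2\,dt,
\]
which fails: the right-hand side has the extra contribution $\int_{|t|>R}\Phi_R^2\,Q_1Q_2\,dt$, and for $\deg Q \leq N(R)-1$ the function $\Phi_R Q = F\cdot Q/P_R$ need not even be in $L^2(\R)$ (it decays only like $F(t)/t$ if $\deg Q = N(R)-1$). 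So the object your machinery produces, the limit of kernels attached to orthogonal polynomials for $\rho_R$ on all of $\R$, is not the object in the theorem. To close this gap you would need a separate estimate that the orthogonal polynomials on $[-R,R]$ and on $\R$ (with respect to the same $\rho_R$) yield asymptotically the same kernel at the relevant scale --- a comparison which is not addressed and which is, in effect, the kind of analytic work the paper does by other means.

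\textbf{Two further concerns you already flag, but which I would rate as more serious than ``secondary.''} First, boundedness of $F$ on $\R$ is not a consequence of Assumption \ref{assumptions} alone: the hypotheses $p_n/n\to 1$ and convergence of $\sum(1/p_n+1/p_{-n})$ allow deviations $p_n-n$ that are unbounded, and the generating function of the sine process is not known to be uniformly bounded on $\R$ for almost every configuration --- you would need a quantitative two-sided comparison of $|F|$ with $|\sin\pi z|$ near $\R$ that the given hypotheses do not supply. Second, the density of $\bigcup_R V_R$ in $PW_\pi$ is exactly a completeness statement for $\{e^{ip_nx}\}$ in $L^2(-\pi,\pi)$ at the critical exponent; Beurling--Malliavin interior density $1$ only gives completeness at radii $<\pi$, and the boundary case is famously delicate. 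Your plan to push a contour-integral/Lagrange-remainder argument through again requires precise control of $|F|$ off the real line, looping back to the first concern.

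\textbf{Comparison with the paper.} The paper avoids all of these difficulties by never leaving the interval: Proposition \ref{prop1} sandwiches $w_R$ between two varying Jacobi-type exponential weights $w_{R,\alpha}^{\pm}$ on $[-1,1]$, Proposition \ref{prop2} establishes sine-kernel universality for the comparison weights via Riemann--Hilbert analysis (with explicit uniformity in the linear shift parameter $\varepsilon$), and Lubinsky's inequality \eqref{eq:DLineq} transfers universality from the comparison weights to $w_R$ without ever needing the canonical product $F$ or Paley--Wiener completeness. Your route, if it could be made rigorous, would be conceptually striking and would connect to the de Branges/sampling literature, but the interval mismatch is a genuine obstruction rather than a technicality.
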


To prove Theorem \ref{mainthm} we are going to rescale the weights to the 
interval $[-1,1]$.  We consider the new weights 
\begin{equation} \label{def:wR}
w_R(t):=\rho_R(Rt),
\end{equation}
so that $\sqrt{R}\varphi_j(t;\rho_R)=\varphi_j(t/R;w_R)$, and consequently,
\[
K_N(x,y;\rho_R)=\frac{1}{R} K_N\left(\frac{x}{R},\frac{y}{R};w_R\right).
\]
Since $N(R)/R\to 2$ as $R\to\infty$, 
Theorem \ref{mainthm} will follow as a corollary of the following result.

\begin{theorem}\label{mainthm:wR}
Let $(p_n)_{n \in \mathbb Z}$ be a doubly infinite sequence satisfying
Assumption \ref{assumptions}.   
Let $N$ be an integer depending on $R$ in such a way that 
\begin{equation} \label{eq:NRlimit}
	\lim_{R \to \infty} \frac{N}{R} = 2. 
\end{equation}
Then we have
\begin{align}\label{eq:KNuniv}
	\lim_{R\to\infty}\frac{2}{N}K_N\left(\frac{2x}{N},\frac{2y}{N};w_R\right)
	=\frac{\sin \pi (x-y)}{\pi(x-y)} 
\end{align}
uniformly for $x$ and $y$ in compact subsets of the real line.
\end{theorem}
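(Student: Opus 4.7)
The plan is to reformulate Theorem~\ref{mainthm:wR} as a convergence statement for reproducing kernels of finite-dimensional subspaces of entire functions of exponential type $\pi$, with the limit being the reproducing kernel $\sin\pi(x-y)/\pi(x-y)$ of the Paley--Wiener space $PW_\pi$. Since Theorem~\ref{mainthm:wR} is equivalent to Theorem~\ref{mainthm} via the simple rescaling $t\mapsto Rt$, I would work directly with the latter.

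The key observation is the factorization $\rho_R(t) = (f(t)/Q_R(t))^2$ on $[-R,R]$, where $f(z) := \prod_n(1 - z/p_n)$ is an entire function of order $1$ and exponential type $\pi$ (convergence of the product in principal value is ensured by \eqref{convergence:pv}, and the type is fixed by the density $p_n/n\to 1$), and $Q_R(z) := \prod_{|p_n|\leq R}(1 - z/p_n)$ is a polynomial of degree $N(R)$. The map $P \mapsto Pf/Q_R$ is an isometry from the space of polynomials of degree less than $N(R)$ equipped with $L^2(\rho_R\,dt)$ onto the $N(R)$-dimensional subspace
$$\mathcal{G}_R := \{Pf/Q_R : P \in \mathcal{P}_{N(R)-1}\} \subset L^2([-R,R]).$$
Elements of $\mathcal G_R$ are entire functions of exponential type at most $\pi$ that vanish at every $p_n$ with $|p_n| > R$, and one then checks that $K_{N(R)}(x,y;\rho_R)$ is precisely the reproducing kernel of $\mathcal G_R$ with respect to the $L^2([-R,R])$ inner product.

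Theorem~\ref{mainthm} thus reduces to showing that this reproducing kernel converges, uniformly on compact subsets of $\mathbb R^2$, to $\sin\pi(x-y)/\pi(x-y)$. I would approach this through matching upper and lower bounds on the diagonal kernel $K_{N(R)}(x,x;\rho_R)$, then passing to the off-diagonal values via the Christoffel--Darboux identity and Cauchy--Schwarz. The upper bound would exploit the extremal characterization $K_{N(R)}(x,x;\rho_R) = \sup\{|g(x)|^2 : g \in \mathcal G_R,\ \|g\|_{L^2([-R,R])} \leq 1\}$, together with Plancherel--Polya--type bounds on entire functions of exponential type $\pi$ whose zero set contains $\{p_n : |p_n|>R\}$, to compare $\|g\|_{L^2([-R,R])}$ with $\|g\|_{L^2(\mathbb R)}$. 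The lower bound would proceed by producing, for the extremal function $g^*(t) = \sin\pi(x-t)/\pi(x-t) \in PW_\pi$, an approximant $g_R\in\mathcal G_R$ (an entire function of type $\pi$ vanishing at all $p_n$ with $|p_n|>R$) with $g_R(x)$ close to $g^*(x)$ and $\|g_R\|_{L^2([-R,R])}$ close to $\|g^*\|_{L^2(\mathbb R)} = 1$.

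The main obstacle will be this approximation step, which is essentially an interpolation problem in the Paley--Wiener space: one must quantitatively control the ``cost'' of forcing vanishing at the infinite node set $\{p_n : |p_n|>R\}$ while keeping the function in exponential type $\pi$ and uniformly small in $L^2$ outside $[-R,R]$. Assumption~\ref{assumptions} is tailored for exactly this: $p_n/n \to 1$ places the nodes in the perturbative regime where sampling/interpolation in $PW_\pi$ is stable (in the spirit of Kadec's $\frac14$-theorem and its refinements), while the principal-value convergence \eqref{convergence:pv} is what ensures that $f$ itself has exponential type exactly $\pi$, matching $PW_\pi$ in the limit. Producing the approximants with error estimates that are uniform in $R$ and in $(x,y)$ over compact sets is the technical heart of the argument.
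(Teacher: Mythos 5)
You have reformulated the problem in the language of de Branges/Paley--Wiener reproducing kernels: $K_{N(R)}(\cdot,\cdot;\rho_R)$ is (up to signs) the reproducing kernel of the $N(R)$-dimensional space $\mathcal G_R$ of entire functions of exponential type $\pi$ vanishing at every $p_n$ with $|p_n|>R$, endowed with the $L^2([-R,R])$ inner product, and the goal becomes showing this converges to the $PW_\pi$ kernel $\sin\pi(x-y)/\pi(x-y)$. The algebra of this reformulation is correct --- the isometry $P\mapsto Pf/Q_R$, the exponential-type computation for $f$, the extremal characterization of the diagonal --- and the spirit is close to Lubinsky's reproducing-kernel approach to universality. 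It is, however, a genuinely different route from the one the paper takes, which never leaves the world of weighted polynomials: it sandwiches $w_R$ between two explicit varying weights $(1-t^2)^{\pm1/2}e^{-NV_{\alpha,\varepsilon}(t)}$ (Proposition~\ref{prop1}), proves the sine limit for these model weights by Riemann--Hilbert analysis of their equilibrium measures with uniformity in $\varepsilon$ (Proposition~\ref{prop2}), and then transfers the limit to $w_R$ via the Christoffel-function monotonicity \eqref{eq:CFineq} and Lubinsky's inequality \eqref{eq:DLineq}.

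The trouble is that the step you yourself label ``the technical heart'' --- quantitative, $R$-uniform interpolation in $PW_\pi$ with prescribed vanishing on $\{p_n:|p_n|>R\}$ for the lower bound, and the complementary estimate $\|g\|_{L^2([-R,R])}\ge(1-o(1))\|g\|_{L^2(\mathbb R)}$ for $g\in\mathcal G_R$ for the upper bound --- is not supported by Assumption~\ref{assumptions}. The conditions $p_n/n\to1$ and principal-value convergence of $\sum p_n^{-1}$ fix the exponential type of $f$, but they do not make $\{p_n\}$ a complete interpolating sequence or a stable sampling set for $PW_\pi$: that kind of stability requires something like Kadec's $\frac14$-condition or a Muckenhoupt $A_2$ condition on the generating function, none of which follows from (a)--(c). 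It is not even clear under (a)--(c) alone that the canonical product $f$ is bounded on $\mathbb R$, which you would need (together with the degree count $\deg P\le N-1<\deg Q_R$) to place $\mathcal G_R$ inside $L^2(\mathbb R)\cap PW_\pi$ and to invoke $S(x,x)=1$ as the extremal constant in the upper bound. So the proposal has a real gap at its center; the paper's sandwich-plus-comparison argument is designed precisely to sidestep any fine interpolation-theoretic structure of the node set, using only the density and summability encoded in Assumption~\ref{assumptions}.
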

	
\subsection{Outline of the proof}
A universality limit such as \eqref{eq:KNuniv} is well-known for varying
weights of the form $e^{-NV(t)}$, see \cite{DKMVZ1}, where $V$ is
real analytic on $\mathbb R$.
Suppose that the equilibrium measure $\mu_V$ corresponding to the external field 
$V$ has  a density $\psi_V$ (see \cite{Deift, SaffTotik} for main references
on equilibrium measures in external fields). 
If $\psi_V(0) > 0$, then  the universality  says that
\begin{equation} \label{eq:universalV} 
	\lim_{N \to \infty} \frac{1}{\psi_V(0)N} K_N \left(\frac{x}{\psi_V(0) N},
	 \frac{y}{\psi_V(0) N}; e^{-NV} \right)
	= \frac{\sin \pi(x-y)}{\pi(x-y)}. 
	\end{equation}

The external field $V$ that plays a role in this paper is
\begin{equation} \label{eq:Vdef} 
	V(t) = (1+t) \log(1+t) + (1-t) \log(1-t), \qquad t \in [-1,1]. 
	\end{equation}
It is easy to see by direct computation that 
\[ V(t) = \int_{-1}^1 \log|t-s| ds, \qquad \text{for } t \in [-1,1], \]
which means that the equilibrium measure for $V$ has the constant density 
\begin{equation} \label{eq:psiV} 
	\psi_{V}(t) \equiv 1/2, \qquad t \in [-1,1]. 
	\end{equation}
The universality limit \eqref{eq:universalV} holds for this $V$ and thus
takes the shape
\begin{equation} \label{eq:universalV0} 
	\lim_{N \to \infty} \frac{2}{N} K_N \left(\frac{2x}{N}, \frac{2y}{N}; 
	e^{-NV} \right)
	= \frac{\sin \pi(x-y)}{\pi(x-y)}. 
	\end{equation}
	
The limit \eqref{eq:KNuniv} for $w_R$ can be understood
from comparison with \eqref{eq:universalV0}, since it turns out
that $w_R \approx e^{-NV}$ in the sense that as $N,R \to \infty$
with  $N/R \to 2$ 
\begin{equation} \label{eq:wRapprox} 
	\lim_{R \to \infty} - \frac{1}{N} \log w_R(t) = NV(t), \qquad t \in (-1,1). 
	\end{equation}
The limit \eqref{eq:wRapprox} follows from the inequalities given in Proposition \ref{prop1} below. 

In what follows we assume that $R, N$ are given in such a way that 
$N/R \to 2$ as $R \to \infty$, and we may write $\lim_{R \to \infty}$
and $\lim_{N\to\infty}$ interchangeably.
For every $R> 0$ we define
\begin{equation} \label{eq:epsR}
	\varepsilon_R = \frac{2R}{N} \sum_{|p_n| > R} \frac{1}{p_n}.
	\end{equation}
Because of the assumption \eqref{convergence:pv} and \eqref{eq:NRlimit}
we have $\varepsilon_R \to 0$ as $R \to \infty$. 

The following proposition will be proven in Section \ref{sec:proofprop1}.
\begin{proposition}  \label{prop1}
\begin{enumerate}
\item[(a)] For every $\alpha > 1$, there is $R_{\alpha} > 0$ such that if $R \geq R_{\alpha}$, then  
\begin{equation} \label{eq:wRupperest} 
	w_R(t)  \leq 
	e^{-N \left(V \left( \frac{t}{\alpha}\right) + \varepsilon_R t \right)}, 
	\qquad t \in [-1,1].
\end{equation}
\item[(b)] For every $\alpha>1$ and $\beta\in (0,1)$, there is $R_{\alpha,\beta}>0$ such that if $R \geq R_{\alpha,\beta}$, then
\begin{equation} \label{eq:wRlowerest}
	w_R(t)  \geq e^{-N(V(\alpha t) + \varepsilon_R t)}, 
	\qquad t \in [-\beta, \beta].
	\end{equation}
\end{enumerate}
\end{proposition}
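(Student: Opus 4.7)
The plan is to pass to logarithms, extract the linear part in $1/p_n$ which produces the $N\varepsilon_R t$ contribution, and then sandwich the remainder between appropriate values of $V$. Concretely, set $g(x):= -\log(1-x) - x = \sum_{k\geq 2} x^k/k$, so that $g\geq 0$ on $(-1,1)$ with $g(x) = O(x^2)$. Writing $\log(1 - Rt/p_n) = -Rt/p_n - g(Rt/p_n)$ and summing in principal value over $|p_n|>R$ (the $g$-sum converging absolutely), the definition \eqref{eq:epsR} of $\varepsilon_R$ gives
\[
-\log w_R(t) - N\varepsilon_R t \;=\; 2\sum_{|p_n|>R} g(Rt/p_n).
\]
The two inequalities \eqref{eq:wRupperest} and \eqref{eq:wRlowerest} therefore amount to
\[
NV(t/\alpha) \;\leq\; 2\sum_{|p_n|>R} g(Rt/p_n) \;\leq\; NV(\alpha t),
\]
with the two directions to be established separately.

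The comparison quantity is the continuous integral $\int_{|s|>R} g(Rt/s)\,ds = RV(t)$, obtained from the substitution $s = Ru$ together with the identity
\[
\int_{|u|>1} g(t/u)\,du \;=\; -\int_1^\infty \log\bigl(1-t^2/u^2\bigr)\,du \;=\; V(t),
\]
the last equality being a direct calculation using the explicit formula \eqref{eq:Vdef}; the linear-in-$1/u$ term disappears in the principal-value sense. The crucial structural property of the integrand is that, on each of the two half-lines $(R,\infty)$ and $(-\infty,-R)$, the function $s\mapsto g(Rt/s)$ is non-negative, strictly decreasing in $|s|$, and tends to $0$ at infinity (because $g$ is increasing on $(0,1)$, decreasing on $(-1,0)$, and $Rt/s$ moves monotonically towards $0$ as $|s|$ grows). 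This monotonicity, together with the hypothesis $p_n/n\to 1$, drives a Riemann-type sum-to-integral comparison.

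Given $\alpha>1$, set $\tilde\alpha = \sqrt{\alpha}$; by \eqref{limit:pn/n} there is $N_0$ such that $|n|/\tilde\alpha \leq |p_n| \leq \tilde\alpha|n|$ for $|n|\geq N_0$, and we take $R$ large enough that $R \geq N_0$ and $N/(2R)$ is suitably close to $1$. For the upper estimate \eqref{eq:wRupperest} we need the \emph{lower} bound on the sum: the inclusion $\{|n|>\tilde\alpha R\} \subseteq \{|p_n|>R\}$ (from $|p_n|\geq |n|/\tilde\alpha$) followed by the termwise comparison $g(Rt/p_n) \geq g(Rt/(\tilde\alpha n))$ (from $|p_n|\leq \tilde\alpha|n|$ and the monotonicity of $g$ on each side of $0$) gives
\[
\sum_{|p_n|>R} g(Rt/p_n) \;\geq\; \sum_{|n|>\tilde\alpha R} g(Rt/(\tilde\alpha n)) \;\geq\; \int_{|s|>\tilde\alpha R + 1} g(Rt/(\tilde\alpha s))\,ds,
\]
where the last inequality is the standard lower estimate for a sum of a positive decreasing function by an integral. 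The substitution $s = \tilde\alpha R u$ converts the integral into $\tilde\alpha R \int_{|u|>1 + O(R^{-1})} g(t/(\tilde\alpha^2 u))\,du$, which tends to $\tilde\alpha R\, V(t/\tilde\alpha^2) = \tilde\alpha R\, V(t/\alpha)$; for $R$ sufficiently large this exceeds $\tfrac{1}{2}NV(t/\alpha)$ since $2\tilde\alpha R/N \to \tilde\alpha > 1$, which is \eqref{eq:wRupperest}. For the lower estimate \eqref{eq:wRlowerest} the argument is symmetric: one uses the opposite $\tilde\alpha$-inequalities, enlarges the index set to $\{|n|>R/\tilde\alpha\} \supseteq \{|p_n|>R\}$, and ends up with the bound $(R/\tilde\alpha)V(\alpha t)$, which is at most $\tfrac{1}{2}NV(\alpha t)$ for $R$ large; the restriction $|t|\leq\beta<1$ keeps $\alpha t$ in a range where this comparison remains valid. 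The main obstacle is the tight bookkeeping of two distinct $\tilde\alpha$-losses—one from the displacement of $p_n$ relative to $n$, one from the Riemann sum approximation—and the choice $\tilde\alpha = \sqrt{\alpha}$ makes their product exactly match the factor $\alpha$ demanded by the proposition.
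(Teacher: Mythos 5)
Your logarithmic decomposition $-\log w_R(t) - N\varepsilon_R t = 2\sum_{|p_n|>R} g(Rt/p_n)$ with $g(x)=-\log(1-x)-x$ is a clean and exact reformulation that isolates the role of $\varepsilon_R$ from the outset, and the identity $\int_{|u|>1}g(t/u)\,du = V(t)$ is correct. This is a genuinely different route from the paper, which works multiplicatively: there, the product is split over paired indices $n$ and $-n$ (the sets $S_1,S_2,S_3$), the elementary bounds $1+x\le e^x$ and $1+x\ge e^{x-x^2/(1+x)}$ are applied factor by factor, and the resulting product is compared to $e^{-RV}$ via Lemma~\ref{weights:inequality}. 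Your version is conceptually tidier because the linear/quadratic split is done once at the level of $\log$.

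There is, however, a genuine gap at the final step of each direction. You need a single threshold $R_\alpha$ that makes \eqref{eq:wRupperest} (resp.~\eqref{eq:wRlowerest}) hold for \emph{all} $t\in[-1,1]$ (resp.~$t\in[-\beta,\beta]$), but the inequality you must establish after the sum-to-integral comparison is
\[
\tilde\alpha R \int_{|u|>1+\delta} g\bigl((t/\alpha)/u\bigr)\,du \;\ge\; \tfrac{1}{2}NV(t/\alpha),\qquad \delta = O(1/R),
\]
and both sides vanish like $t^2$ as $t\to 0$. Your invocation ``tends to $\tilde\alpha R\,V(t/\alpha)$, which exceeds $\tfrac12 NV(t/\alpha)$ for $R$ large'' is a pointwise-in-$t$ statement; near $t=0$ it gives no uniform threshold. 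What is actually needed is to bound the discarded boundary piece
\[
\int_{1<|u|<1+\delta} g\bigl((t/\alpha)/u\bigr)\,du \;\le\; 2\delta\, g(t/\alpha),
\]
and then show $g(t/\alpha)\le C_\alpha V(t/\alpha)$ uniformly on $[-1,1]$ (true, since the ratio is continuous on $(0,1/\alpha]$ and tends to $1/2$ at $0$), so that the error is absorbed into the margin $\tilde\alpha R - N/2\to\infty$. The same uniformity issue arises symmetrically in part (b). This control of the error \emph{relative to $V(t)$} is precisely where the paper invests its effort (the inequality $t^2\le V(t)$ from \eqref{eq:Vseries}, the choice of $\gamma$ in \eqref{eq:gamma}, and the explicit quadratic-error tracking \eqref{eq:claim2}); your sketch leaves it open and should be supplied before the argument is complete. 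A secondary, minor point: for part (b), the enlarged index set and termwise bound place $g$ at arguments up to size $\alpha|t|$, so you implicitly need $\alpha\beta<1$ (the paper shares this implicit restriction, and in its application $\beta=\alpha^{-2}$), and you should require $R$ large enough that every $n$ with $|p_n|>R$ satisfies $|n|\ge N_0$.
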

Note that equality holds in \eqref{eq:wRupperest} and \eqref{eq:wRlowerest}
at $t=0$. The number \eqref{eq:epsR} is such that the derivatives
at $t=0$ are the same as well, which is of course necessary 
in order to have the inequalities \eqref{eq:wRupperest} and \eqref{eq:wRlowerest}
with equality at $t=0$, since $V(0) = V'(0) = 0$.

Based on \eqref{eq:wRupperest} and \eqref{eq:wRlowerest} we
introduce for every $R > 0$ and every $\alpha>1$ 
the following two weights on $[-1,1]$:
\begin{align} \label{eq:wRplus}
	w_{R,\alpha}^+(t) & = 
		(1-t^2)^{-1/2} e^{-N \left(V \left( \frac{t}{\alpha}\right) + 
		\varepsilon_R t \right)},  \\
		\label{eq:wRminus}
	w_{R,\alpha}^-(t) & = (1-\beta^{-2} t^2)^{1/2}
			e^{-N(V(\alpha t) + \varepsilon_R t)}
			\chi_{[-\beta, \beta]}(t),\quad \beta=\alpha^{-2},
	\end{align}
where we introduced the square root prefactors for convenience of further 
asymptotic analysis.

The weights $w_{R,\alpha}^{\pm}$ are sufficiently nice, and 
with known techniques we can find the universality limits for them.
This is stated in part (b) of the next proposition. Part (a) follows
almost immediately from Propositon \ref{prop1} and the definitions. 

\begin{proposition} \label{prop2}
\begin{enumerate}
\item[(a)] For each $\alpha > 1$, there is a number $R_{\alpha}$
such that  for $R \geq R_{\alpha}$,
\begin{equation} \label{eq:ass1a} 
	w_{R,\alpha}^-(t) \leq w_R(t) \leq w_{R,\alpha}^+(t), 
	\qquad t \in [-1,1],
\end{equation}
and
\begin{equation} \label{eq:ass1b} 
	\lim_{R \to \infty} w_{R,\alpha}^-\left(\frac{x}{R}\right) = 
	\lim_{R \to \infty} w_{R,\alpha}^+\left(\frac{x}{R}\right) = 1 
\end{equation}
uniformly for $x$ in compact sets.
\item[(b)] There exist constants $c_\alpha^{\pm} > 0$ such that
for every $\alpha > 1$,
\begin{equation} \label{eq:ass2a} 
	\lim_{R \to \infty} 
	\frac{1}{N} K_N\left( \frac{x}{N}, \frac{y}{N}; w_{R,\alpha}^{\pm}\right) 
		= \frac{\sin \pi c_{\alpha}^{\pm} (x-y)}{\pi(x-y)}
		\end{equation}
uniformly for $x$ and $y$ in compact sets, and the  constants $c_\alpha^{\pm}$
 satisfy
\begin{equation} \label{eq:ass2b} 
	\lim_{\alpha \to 1+} c_{\alpha}^{-} = 
	\lim_{\alpha \to 1+} c_{\alpha}^{+} = \frac{1}{2}. 
	\end{equation}
\end{enumerate}
\end{proposition}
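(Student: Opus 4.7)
For part (a), the inequalities in \eqref{eq:ass1a} follow immediately from Proposition \ref{prop1} and the definitions \eqref{eq:wRplus}-\eqref{eq:wRminus}: since $(1-t^2)^{-1/2}\ge 1$ on $(-1,1)$, the bound \eqref{eq:wRupperest} gives $w_R(t)\le e^{-N(V(t/\alpha)+\varepsilon_R t)}\le w_{R,\alpha}^+(t)$, and since $(1-\beta^{-2}t^2)^{1/2}\le 1$ on $[-\beta,\beta]$, the bound \eqref{eq:wRlowerest} yields $w_{R,\alpha}^-(t)\le e^{-N(V(\alpha t)+\varepsilon_R t)}\le w_R(t)$ there, with $w_{R,\alpha}^-\equiv 0\le w_R$ off $[-\beta,\beta]$. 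For the limit \eqref{eq:ass1b}, the vanishing $V(0)=V'(0)=0$ together with a Taylor expansion gives $V(s)=O(s^2)$ near the origin, so $NV(x/(R\alpha))=O(x^2/R)\to 0$ and $N\varepsilon_R x/R\to 0$ as $R\to\infty$, using $N/R\to 2$ and $\varepsilon_R\to 0$; the Jacobi-type prefactors evaluated at $x/R$ trivially tend to $1$.

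For part (b), both $w_{R,\alpha}^{\pm}$ are varying exponential weights on a compact interval multiplied by a Jacobi-type endpoint factor. Explicitly, $w_{R,\alpha}^+$ is a weight on $[-1,1]$ with external field $V(t/\alpha)+\varepsilon_R t$ and endpoint factor $(1-t^2)^{-1/2}$, while the affine change of variable $t=\tau/\alpha^2$ converts $w_{R,\alpha}^-$ into a weight on $[-1,1]$ with external field $V(\tau/\alpha)+(\varepsilon_R/\alpha^2)\tau$ and endpoint factor $(1-\tau^2)^{1/2}$. Both configurations fall within the reach of the standard bulk universality theorem for varying orthogonal polynomial ensembles, obtained via Deift-Zhou steepest-descent analysis of the associated Riemann-Hilbert problem; see \cite{KMVV,DKMVZ1}. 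The Jacobi-type endpoint factors are handled by local hard-edge (Bessel) parametrices and play no role in the sine-kernel asymptotics at the bulk point $0$. This yields \eqref{eq:ass2a} with $c_\alpha^{\pm}$ equal to the density at $0$ of the equilibrium measure for the exponential part of the weight, constrained to the appropriate support interval; the rescaling also provides the identity $c_\alpha^-=\alpha^2 c_\alpha^+$.

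To establish \eqref{eq:ass2b}, the plan is to compute $c_\alpha^+$, the density at $0$ of the constrained equilibrium measure for the external field $V(\cdot/\alpha)$ on $[-1,1]$. By the scaling derived from \eqref{eq:psiV}, the unconstrained equilibrium measure for $V(\cdot/\alpha)$ on $\R$ has constant density $1/(2\alpha)$ on $[-\alpha,\alpha]$. Since $\alpha>1$, restricting the support to $[-1,1]$ forces a balayage: the constrained equilibrium measure equals $\frac{1}{2\alpha}\chi_{[-1,1]}$ plus the balayage onto $[-1,1]$ of the free mass on $[-\alpha,-1]\cup[1,\alpha]$, and the balayage density at $0$ reduces to the elementary integral $(\sqrt{\alpha^2-1}-\arccos(1/\alpha))/(\pi\alpha)$, which vanishes as $\alpha\to 1^+$. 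Hence $c_\alpha^+\to 1/2$, and via $c_\alpha^-=\alpha^2 c_\alpha^+$ also $c_\alpha^-\to 1/2$.

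The main obstacle is to justify that the universality machinery applies uniformly in the mildly $R$-dependent external field $V(\cdot/\alpha)+\varepsilon_R(\cdot)$. The linear perturbation $\varepsilon_R t$ is $o(1)$ uniformly on $[-1,1]$ and can be absorbed by continuity of the equilibrium problem in the external field, while the fact that the unconstrained support exceeds $[-1,1]$ is handled by setting up the Riemann-Hilbert analysis directly with the constrained equilibrium measure, whose density is positive and smooth at $t=0$ for every fixed $\alpha>1$.
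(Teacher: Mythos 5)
Part (a) is correct and matches the paper's argument exactly. Your computation of $c_\alpha^+$ via balayage of $\tfrac{1}{2\alpha}\,dx$ from $[-\alpha,\alpha]$ onto $[-1,1]$ is also exactly what the paper does (its Proposition \ref{form:equilibrium-measures}), and your expression $\frac{1}{2\alpha}+\frac{\sqrt{\alpha^2-1}-\arccos(1/\alpha)}{\pi\alpha}$ agrees with the paper's \eqref{eq:psialphaat0} via $\arccos(1/\alpha)=\arctan\sqrt{\alpha^2-1}$; the identity $c_\alpha^-=\alpha^2 c_\alpha^+$ from the affine rescaling is likewise correct. (A side note: the paper deliberately uses exponents $\pm\tfrac12$ so that the outer parametrix already matches the endpoint behavior and no local Bessel parametrices are needed -- your remark that the endpoint factors are ``handled by local hard-edge (Bessel) parametrices'' is not what happens here, though it would not change the conclusion.)

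The genuine gap is in the sentence ``can be absorbed by continuity of the equilibrium problem in the external field.'' This is not a proof of the required statement. The external field in $w_{R,\alpha}^\pm$ is $V(t/\alpha)+\varepsilon_R\, t$ with $\varepsilon_R\to 0$, i.e.\ it varies with $N$, so the needed result is a sine-kernel limit for the weights $(1-t^2)^{\pm 1/2}e^{-N V_{\alpha,\varepsilon}(t)}$ with an error term that is \emph{uniform} for $\varepsilon$ in a compact subset of $(-\varepsilon_\alpha,\varepsilon_\alpha)$; once you have that, you can substitute $\varepsilon=\varepsilon_R$ and pass to the limit $R\to\infty$. Continuity of the equilibrium measure in $\varepsilon$ tells you that $\psi_{\alpha,\varepsilon}(0)$ converges to $\psi_{\alpha,0}(0)$, but it says nothing about the speed with which the finite-$N$ correlation kernel approaches its sine-kernel limit, nor that this speed is controlled uniformly in $\varepsilon$. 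None of the off-the-shelf results cited (DKMVZ, Levin--Lubinsky, Totik) come with this uniformity in an auxiliary parameter, as the paper itself points out. The paper fills this gap by carrying out the Deift--Zhou steepest-descent analysis from scratch in its Appendix (Proposition \ref{prop4}), where the key step is the lower bound \eqref{eq:Rexi4}--\eqref{eq:Rexi5}, namely $\Re\xi(z)\ge(\varepsilon_\alpha-|\varepsilon|)\log\tau - M(\tau-1)^2 \ge c>0$ on the lens boundary $\Gamma_\tau$, uniformly in $\varepsilon$, which makes the jump matrix $J_R=I+O(e^{-cN})$ uniformly and hence $R(z)=I+O(e^{-cN})$ uniformly. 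Your proposal identifies this as ``the main obstacle'' but leaves it unresolved; to complete the proof you would need an argument of that type.
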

Proposition \ref{prop2} is proved in Section \ref{sec:proofprop2}.

In Section \ref{sec:proofmain} we turn to the proof of our main Theorem \ref{mainthm:wR}.
We use ideas of Lubinsky \cite{LubinskyAnnals} in order to compare 
the kernel $K_N(x,y; w_R)$
with those for the weights $w_{R, \alpha}^{\pm}$. The properties
listed in Proposition \ref{prop2} turn out to be enough to 
prove Theorem \ref{mainthm:wR}, and this in turn gives us
Theorem \ref{mainthm} as we already mentioned.

\section{Proof of Proposition \ref{prop1}} \label{sec:proofprop1}

\subsection{A lemma}
We start the proof of Proposition \ref{prop1} with a lemma. 

\begin{lemma}\label{weights:inequality} 
Suppose $R$ is a positive integer. Then,
\begin{equation}  \label{eq:weightsinequality} 
\prod_{n  = R}^{\infty} 
\left( 1 - \frac{R^2 t^2}{n^2} \right)^2 \leq e^{-2RV(t)}
	\leq \prod_{n=R+1}^{\infty} 
\left( 1 - \frac{R^2 t^2}{n^2} \right)^2 , \qquad t \in [-1,1]. 
\end{equation}
\end{lemma}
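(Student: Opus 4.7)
My plan is to take logarithms and reduce both inequalities to a sandwich of a sum by integrals of the single-variable function
\[ g_R(s) = -\log\!\left(1 - \frac{R^2 t^2}{s^2}\right), \]
which is nonnegative and strictly decreasing on $s \in (R|t|, \infty)$. The two desired inequalities become
\[ \sum_{n=R+1}^{\infty} g_R(n) \;\leq\; R\,V(t) \;\leq\; \sum_{n=R}^{\infty} g_R(n), \]
so the key tasks are (i) evaluating $\int_{R}^{\infty} g_R(s)\,ds$ in closed form and (ii) comparing that integral against the two sums via monotonicity.

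For step (i), a substitution $u = s/R$ turns the integral into $R\int_{1}^{\infty} -\log(1 - t^2/u^2)\,du$, which I would compute by writing $\log(1 - t^2/u^2) = \log(u-t) + \log(u+t) - 2\log u$ and using the antiderivative $\int \log(u-a)\,du = (u-a)\log(u-a) - (u-a)$. The divergent parts cancel: an expansion at $A\to\infty$ gives $(A-t)\log(A-t) + (A+t)\log(A+t) - 2A\log A \to 0$, and the boundary terms at $u=1$ reproduce exactly $V(t) = (1+t)\log(1+t) + (1-t)\log(1-t)$, yielding
\[ \int_R^{\infty} g_R(s)\,ds = R\,V(t). \]

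For step (ii), since $g_R$ is positive and decreasing on $[R,\infty)$ for $|t|<1$, standard comparison gives $g_R(n) \geq \int_n^{n+1} g_R(s)\,ds$ (sum from $n=R$) and $g_R(n) \leq \int_{n-1}^{n} g_R(s)\,ds$ (sum from $n=R+1$), producing the two bounds in display above. Exponentiating and squaring then recovers \eqref{eq:weightsinequality}. At the endpoints $t=\pm 1$ the left inequality is trivial since the $n=R$ factor vanishes, while the right inequality still works because $g_R$ has only a logarithmic singularity at $s=R$, which is integrable and still allows the monotone sum-to-integral comparison for $n\geq R+1$.

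I do not anticipate any real obstacle: the integral evaluation is the only substantive calculation, and it is a routine integration by parts. The proof is essentially the observation that $V(t)$ is precisely the logarithmic energy integrated over $[-1,1]$, so the shifted tail integral recovers $R\,V(t)$ exactly, and then monotonicity of $g_R$ handles both directions simultaneously.
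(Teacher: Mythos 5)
Your proof is correct and follows essentially the same route as the paper's: both proofs take logarithms, observe that $\psi(x) = -\log(1 - R^2t^2/x^2)$ is positive and decreasing on $[R,\infty)$, sandwich the sum between integrals via the standard Riemann-sum comparison, and identify $\int_R^\infty \psi(x)\,dx = RV(t)$. The only differences are cosmetic: you supply the details of the integral evaluation (which the paper states without proof) and you explicitly address the endpoint $t=\pm1$, neither of which changes the method.
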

\begin{proof}
Let $t \in [-1,1]$. The function $x \mapsto \psi(x) = 
-\log\left(1- \frac{R^2t^2}{x^2}\right)$ is positive and decreasing for $x \geq R$. 
Therefore, by comparing the integral with Riemann sums, we have
\[ \sum_{n=R+1}^{\infty} \psi(n) \leq  
	 \int_R^{\infty} \psi(x) dx \leq \sum_{n=R}^{\infty} \psi(n), \]
since $R$ is an integer. Since $\int_R^{\infty} \psi(x) dx = RV(t)$, we get
\[  - \sum_{n=R}^{\infty} \psi(n) \leq -RV(t)
	\leq   - \sum_{n=R+1}^{\infty} \psi(n). \]
The inequality \eqref{eq:weightsinequality} follows by taking 
exponentials and squaring the expressions.
\end{proof}

\subsection{Proof of Proposition \ref{prop1} (a)}

\begin{proof}
Take $\alpha > 1$. Since and $2R/N \to 1$ as $R \to \infty$
and $p_n/n \to 1$ as $n \to \infty$, there is $R_{\alpha}$ such that 
for $R \geq R_{\alpha}$ we have
\begin{equation} \label{eq:Ralpha1}
	 \frac{2R}{\alpha} < N < 2 \alpha R,
\end{equation}
and if $|n| \geq  \lfloor \alpha R \rfloor $, then
	\begin{equation} \label{eq:Ralpha2}
		\frac{1}{\alpha} \leq \frac{p_n}{n} \leq \alpha,
		\end{equation}
	and
	\begin{equation} \label{eq:Ralpha3}
		|p_n| > R.
		\end{equation}

We then write for $R \geq R_{\alpha}$ and $t \in [-1,1]$,
\begin{align} \nonumber 
w_R(t) & = \prod_{|p_n| \geq R} \left(1- \frac{Rt}{p_n} \right)^2 \\
	& 
	= \prod_{n \in S_1}  \left(1- \frac{Rt}{p_n} \right)^2 \,
	 \left(\prod_{n \in S_2 \cup S_3}  \left(1- \frac{Rt}{p_n} \right)^2  
		\left(1- \frac{Rt}{p_{-n}} \right)^2 \right)  \label{eq:wRsplit}
		\end{align}
where the sets $S_1$, $S_2$ and $S_3$ are defined as follows
\begin{equation} \label{eq:setSj}
\begin{aligned}
	S_1 & = \{ n \in \mathbb Z \mid |p_n| > R \text{ and }
		|n| < \lfloor \alpha R \rfloor \}, \\ 
	S_2 & = \{ n \in \mathbb N \mid n \geq \lfloor \alpha R \rfloor 
		\text{ and }
		p_n \geq -p_{-n} \}, \\
	S_3 & = \{ n \in \mathbb N \mid n \geq \lfloor \alpha R \rfloor 
		\text{ and }
		p_n < -p_{-n} \}. 
\end{aligned}
\end{equation} 
Because of \eqref{eq:Ralpha3}
and the above definitions \eqref{eq:setSj}, we have a disjoint union
\begin{equation} \label{eq:Sunion} 
	\{ n \mid |p_n| > R \}
	= S_1 \cup S_2 \cup (-S_2) \cup S_3 \cup (-S_3), 
	\end{equation}
which is used in the factorization \eqref{eq:wRsplit}.
	Also $S_1$ is a finite set.

For $n \in S_1$, we use the estimate 
$1 - \frac{Rt}{p_n}  \leq e^{- \frac{Rt}{p_n}}$
which follows from the elementary inequalities $0 \leq 1+x \leq e^x$  if
$x \geq -1$. Thus for $t \in [-1,1]$,
\begin{equation} \label{eq:S1prod} 
	\prod_{n \in S_1}  \left(1 - \frac{Rt}{p_n} \right)^2
	\leq e^{-2Rt \sum\limits_{n\in S_1} \frac{1}{p_n}}. 
	\end{equation}
	
For $n \in S_2$, we write for $t \in [-1,1]$,
\begin{align} \nonumber 
	\left(1 - \frac{Rt}{p_n} \right) \left(1 - \frac{Rt}{p_{-n}} \right)
	& =  
	\left(1 - \frac{Rt}{p_n} \right) 	\left(1 + \frac{Rt}{p_n} - 
	Rt \left(\frac{1}{p_n} +  \frac{1}{p_{-n}} \right) \right) \\
	& = \label{eq:S2prod1} 
	\left(1 - \frac{R^2t^2}{p_n^2} \right)
	\left(1 + \frac{Rt}{1+ \frac{Rt}{p_n}} \left(-\frac{1}{p_n}- \frac{1}{p_{-n}}\right)\right).
\end{align}
Since $p_n \geq - p_{-n} \geq R$ by  \eqref{eq:Ralpha3} and the definition 
of $S_2$ in \eqref{eq:setSj}, we have 
$-\frac{1}{p_n}- \frac{1}{p_{-n}} \geq 0$, and
we can use $\frac{Rt}{1+ \frac{Rt}{p_n}} \leq Rt$ for $t \in [-1,1]$, 
to obtain from \eqref{eq:S2prod1}
\begin{align} \nonumber 
	\left(1 - \frac{Rt}{p_n} \right)\left(1 - \frac{Rt}{p_{-n}} \right)
	& \leq 
	\left(1 - \frac{R^2t^2}{p_n^2} \right)
	\left(1 + Rt \left(-\frac{1}{p_n}- \frac{1}{p_{-n}}	\right) \right) 
	\\ \label{eq:S2prod2} 
	& \leq  	\left(1 - \frac{R^2t^2}{p_n^2} \right)
	e^{-Rt \left(\frac{1}{p_n} + \frac{1}{p_{-n}} \right)},
	\end{align}
where we used again that $1+x \leq e^x$. Since $p_n \leq n \alpha$
by \eqref{eq:Ralpha2}, we can further estimate this by
\begin{equation}  \label{eq:S2prod3} 
	\left(1 - \frac{Rt}{p_n} \right)\left(1 - \frac{Rt}{p_{-n}} \right)
	 \leq  	\left(1 - \frac{R^2t^2}{\alpha^2 n^2} \right)
	e^{-Rt \left(\frac{1}{p_n} + \frac{1}{p_{-n}}\right)}.
	\end{equation}
	
The estimates for $n \in S_3$ are similar. We interchange the
roles of $p_n$ and $p_{-n}$ in \eqref{eq:S2prod1}, we use 
$\frac{Rt}{1+\frac{Rt}{p_{-n}}} \geq Rt$ for $t \in [-1,1]$ to
obtain \eqref{eq:S2prod2} with $p_n$ interchanged with $p_{-n}$. 
Since $0 > p_{-n} \geq -n \alpha$, we also find \eqref{eq:S2prod3} 
for $n \in S_3$. Hence  
\begin{equation} \label{eq:S2S3prod} 
	\prod_{n \in S_2 \cup S_3}  \left(1 - \frac{Rt}{p_n} \right)^2 
		\left(1- \frac{Rt}{p_{-n}} \right)^2 
	\leq  	\prod_{n \in S_2 \cup S_3}
	\left(1 - \frac{R^2t^2}{\alpha^2 n^2} \right)^2
	e^{-2Rt \sum\limits_{n \in S_2 \cup S_3} \left(\frac{1}{p_n} + \frac{1}{p_{-n}}\right)}.
	\end{equation}
	
Using \eqref{eq:S1prod} and \eqref{eq:S2S3prod}
in \eqref{eq:wRsplit}, we obtain
\begin{align} 
	w_R(t)  & \leq  \nonumber
	\left(	\prod_{n \in S_2 \cup S_3}
	\left(1 - \frac{R^2t^2}{\alpha^2 n^2} \right)^2 \right)
		e^{-2Rt \left( \sum\limits_{n \in S_1} \frac{1}{p_n} + 
		\sum\limits_{n \in S_2 \cup S_3} 
		\left(\frac{1}{p_n} + \frac{1}{p_{-n}}\right)
		 \right)} \\
	 & = \label{eq:wRplus2inequality} 
	\left( \prod_{n = \lfloor \alpha R \rfloor}^{\infty} 
	\left(1 - \frac{R^2t^2}{\alpha^2 n^2} \right)^2 \right)
	 e^{-N \varepsilon_R t},
	 \end{align}
where in the last step we used \eqref{eq:epsR} and the 
property \eqref{eq:Sunion}.

Then we use the first inequality in Lemma \ref{weights:inequality}
with $\lfloor \alpha R \rfloor$ instead of $R$ and $t/\alpha$ instead of $t$.
Then by \eqref{eq:weightsinequality} and \eqref{eq:wRplus2inequality},
\[ w_R(t) \leq e^{-2 \lfloor \alpha R \rfloor V \left( \frac{t}{\alpha} \right)}
	e^{-N \varepsilon_R t}. \]
We finally use \eqref{eq:Ralpha1} and we obtain the  required
inequality \eqref{eq:wRupperest}.   \end{proof}

\subsection{Proof of Proposition \ref{prop1} (b)}

\begin{proof}
The proof for the lower estimate \eqref{eq:wRlowerest} follows along the same lines, 
but it is more involved.
Instead of the inequality $1+x \leq e^x$, we now use a less obvious,
but still elementary, inequality
\begin{equation} \label{eq:1pxlower0} 
	1+ x \geq e^{x- \frac{x^2}{1+x}}, \qquad x > - 1. 
	\end{equation}
It follows from \eqref{eq:1pxlower0}
that for any $\beta \in (0,1)$,
\begin{equation} \label{eq:1pxlower} 
	1+ x \geq e^{x-  \frac{1}{1-\beta} x^2}, \qquad x \geq  - \beta. 
	\end{equation}

Then, fix $\alpha > 1$, $\beta\in (0,1)$,  and take $\gamma\in (1,2)$ so close to $1$ that
\begin{equation} \label{eq:gamma}
	\gamma^2 + 12 \frac{\gamma^2 - 1}{1-\beta} < \alpha^2. 
	\end{equation}

As in the proof of the upper estimate we assume that $R$ is large enough
so that \eqref{eq:Ralpha1},  \eqref{eq:Ralpha2}, \eqref{eq:Ralpha3} hold,
but with $\alpha$ replaced by $\gamma$. Thus we take 
$R_{\gamma} > \frac{4}{\gamma^2-1}$  big enough
such that for $R \geq R_{\gamma}$ we have
\begin{equation} \label{eq:Rgamma1}
	 \frac{2R}{\gamma} < N < 2 \gamma R,
\end{equation}
and if $|n| \geq  \lfloor \gamma R \rfloor $, then
	\begin{equation} \label{eq:Rgamma2}
		\frac{1}{\gamma} \leq \frac{p_n}{n} \leq \gamma,
		\end{equation}
	and
	\begin{equation} \label{eq:Rgamma3}
		|p_n| > R.
		\end{equation}
Since $R_{\gamma} > \frac{4}{\gamma^2-1}$, we also have that
for $R \geq R_{\gamma}$ 
\begin{equation} \label{eq:Rgamma4} 
	R \geq \frac{4}{\gamma^2-1}.
	\end{equation}

We fix $R \geq R_{\gamma}$ and $t \in [-\beta, \beta]$. 
We use the 
factorization of $w_R$ as in \eqref{eq:wRsplit}, but with
slightly different sets $S_1, S_2, S_3$, namely
\begin{equation} \label{eq:setSjstar}
\begin{aligned}
	S_1^* & = \{ n \in \mathbb Z \mid |p_n| > R \text{ and }
		|n| \leq \lceil \gamma R \rceil \}, \\ 
	S_2^* & = \{ n \in \mathbb N \mid n \geq \lceil \gamma R \rceil + 1 
		\text{ and }
		p_n \geq -p_{-n} \}, \\
	S_3^* & = \{ n \in \mathbb N \mid n \geq \lceil \gamma R \rceil + 1 
		\text{ and }
		p_n < -p_{-n} \}. 
\end{aligned}
\end{equation} 

For $n \in S_1^*$ we obtain from \eqref{eq:1pxlower}
that for $t\in[-\beta,\beta]$
\[ 1 - \frac{Rt}{p_n} \geq e^{- \frac{Rt}{p_n} - 
	\frac{(Rt)^2}{(1-\beta) p_n^2}}\,, \]
since $|\frac{Rt}{p_n}| \leq |t| \leq \beta$, and therefore
\begin{equation} \label{eq:S1prodlow}
 \prod_{n \in S_1^*} \left(1 - \frac{Rt}{p_n} \right)^2
 	\geq e^{- 2Rt \sum\limits_{n \in S_1} \frac{1}{p_n}
 	- 2 \frac{R^2 t^2}{1-\beta} 
 	\sum\limits_{n \in S_1} \frac{1}{p_n^2}}.
\end{equation}

For $n \in S_2^*$, we do not use \eqref{eq:S2prod1} but rather
\begin{align} \label{eq:S2prod1low}  
	\left(1 - \frac{Rt}{p_n} \right) \left(1 - \frac{Rt}{p_{-n}} \right)
	= \left(1 - \frac{R^2t^2}{p_{-n}^2} \right)
	\left(1 + \frac{Rt}{1+ \frac{Rt}{p_{-n}}} 
	\left(-\frac{1}{p_n}- \frac{1}{p_{-n}}\right) 	\right).
\end{align}
Since $p_{-n} \leq - R$, see \eqref{eq:Rgamma3}, 
we have $\frac{Rt}{1+ \frac{Rt}{p_{-n}}} \geq Rt$,
and we obtain since $-\frac{1}{p_n}- \frac{1}{p_{-n}} \geq 0$,
\begin{align} \label{eq:S2prod2low}  
	\left(1 - \frac{Rt}{p_n} \right) \left(1 - \frac{Rt}{p_{-n}} \right)
	\geq \left(1 - \frac{R^2t^2}{p_{-n}^2} \right)
	\left(1 + Rt \left(-\frac{1}{p_n}- \frac{1}{p_{-n}} \right)\right).
\end{align}
Since $p_n \geq - p_{-n} \geq R$ we then have
\[ \left| Rt \left(-\frac{1}{p_n}- \frac{1}{p_{-n}} \right)\right|
	\leq \left| \frac{Rt}{p_{-n}} \right| \leq |t| \leq \beta,
	\]	
and therefore we can apply  \eqref{eq:1pxlower} to estimate
the last factor in \eqref{eq:S2prod2low}. In the first factor
we use  $p_{-n}^2 \geq  \left(\frac{n}{\gamma} \right)^2$, see \eqref{eq:Rgamma2},
and we obtain from \eqref{eq:S2prod2low} 
\begin{align} \label{eq:S2prod3low}  
	\left(1 - \frac{Rt}{p_n} \right) \left(1 - \frac{Rt}{p_{-n}} \right)
	\geq \left(1 - \frac{\gamma^2 R^2t^2}{n^2} \right)
	e^{-Rt \left( \frac{1}{p_n} + \frac{1}{p_{-n}} \right)
	- \frac{R^2 t^2}{1-\beta} R^2 t^2 
	\left( \frac{1}{p_n} + \frac{1}{p_{-n}} \right)^2}.
\end{align}

The same estimate \eqref{eq:S2prod3low} holds  
for $n \in S_3^*$ as well (we find it by interchanging the roles of 
$p_n$ and $p_{-n}$ in \eqref{eq:S2prod2low}), and we obtain
\begin{multline} \label{eq:S2S3prodlow}
	\prod_{n \in S_2^* \cup S_3^*} 
	\left(1 - \frac{Rt}{p_n} \right) \left(1 - \frac{Rt}{p_{-n}} \right)^2 
	\geq \prod_{n \in S_2^* \cup S_3^*} 
	\left(1 - \frac{\gamma^2 R^2t^2}{n^2} \right)^2 \\
	\times
	e^{-2Rt  \sum\limits_{n \in S_2^* \cup S_3^*} 
		\left( \frac{1}{p_n} + \frac{1}{p_{-n}} \right)
		- 2\frac{R^2t^2}{1-\beta}  
	  \sum\limits_{n \in S_2^* \cup S_3^*} 
	  \left( \frac{1}{p_n} + \frac{1}{p_{-n}} \right)^2}.
\end{multline}

Combining \eqref{eq:wRsplit} (with $S_j$ replaced by $S_j^*$ for $j=1,2,3$),
\eqref{eq:S1prodlow} and \eqref{eq:S2S3prodlow} we have for 
$t \in [-\beta, \beta]$,
\begin{align} \nonumber
	w_R(t) & \geq \left(
	\prod_{n \in S_2^* \cup S_3^*} 
	\left(1 - \frac{\gamma^2 R^2t^2}{n^2} \right)^2	\right)
	e^{-2Rt \left(\sum\limits_{n \in S_1^*} \frac{1}{p_n} +  
		\sum\limits_{n \in S_2^* \cup S_3^*} 
		\left( \frac{1}{p_n} + \frac{1}{p_{-n}} \right) \right)} \\
		\nonumber
		& \qquad \times
		e^{- \frac{2R^2}{1-\beta}  
		\left(\sum\limits_{n\in S_1^*} \frac{1}{p_n^2}
		+ \sum\limits_{n \in S_2^* \cup S_3^*} 
	  \left( \frac{1}{p_n} + \frac{1}{p_{-n}} \right)^2\right) t^2} \\
	  \nonumber 	 & = 
	\left(	\prod_{n = \lceil \gamma R \rceil + 1}^{\infty} 
	\left(1 - \frac{\gamma^2 R^2t^2}{n^2} \right)^2	 \right)
	e^{-N  \varepsilon_R t} \\
	&  \qquad \times \label{eq:wRlower2}
		e^{- \frac{2R^2}{1-\beta}  
		\left(\sum\limits_{n\in S_1^*} \frac{1}{p_n^2}
		+ \sum\limits_{n \in S_2^* \cup S_3^*} 
	  \left( \frac{1}{p_n} + \frac{1}{p_{-n}} \right)^2\right) t^2}.
\end{align}
where we used \eqref{eq:epsR}.

It remains to estimate the first and third factor in the
product in the right-hand side of\eqref{eq:wRlower2}. 

To estimate the first factor  we use the second inequality
of \eqref{eq:weightsinequality} with $\lceil \gamma R \rceil$
instead of $R$ and $\frac{\gamma R}{\lceil \gamma R \rceil} t$ instead
of $t$. This gives us
\begin{equation} \label{eq:claim1a} 
	\prod_{n = \lceil \gamma R \rceil + 1}^{\infty} 
	\left(1 - \frac{\gamma^2 R^2t^2}{n^2} \right)^2	 
	\geq e^{-2 \lceil \gamma R \rceil V(\frac{\gamma R}{\lceil \gamma R \rceil}t)}  
\end{equation}
Since $V$ is convex  and $V(0) = 0$ (as can easily 
be checked from the definition \eqref{eq:Vdef}), we have
\[ V\left(\frac{\gamma Rt}{\lceil \gamma R \rceil}\right) \leq 
	\frac{\gamma R}{\lceil \gamma R \rceil}  V(t). \]
Using this in \eqref{eq:claim1a}, and also recalling $2R \leq \gamma N$,
see \eqref{eq:Rgamma1}, we obtain   
\begin{equation} \label{eq:claim1}
	\prod_{n = \lceil \gamma R \rceil + 1}^{\infty} 
	\left(1 - \frac{\gamma^2 R^2t^2}{n^2} \right)^2	 
	\geq e^{- \gamma^2 N V(t)}.
\end{equation}

Next we estimate the third factor in \eqref{eq:wRlower2}.
If $n \in S_1^*$ then $|p_n| > R$ and 
\begin{equation} \label{eq:claim2a}
	\sum_{n \in S_1^*} \frac{1}{p_n^2} \leq \frac{\# S_1^*}{R^2}. 
	\end{equation}
Also if $n \in S_1^*$ we have $|n| \leq \lceil \gamma R \rceil$
by the definition of $S_1^*$ in \eqref{eq:setSjstar}, 
and $|n| \geq |p_n|/\gamma \geq R/\gamma$, where we also used 
\eqref{eq:Rgamma2}. Thus $S_1^*$ is contained in $[-\gamma R -1, - \frac{R}{\gamma}] \cup 
	[\frac{R}{\gamma}, \gamma R+1]$.
Since $S_1^*$  contains integers only, we obtain 
\[ \# S_1^* \leq 2(\gamma R + 1 - \tfrac{R}{\gamma} +1)  
	< 2(\gamma^2-1) R + 4. \]
Combining this with \eqref{eq:claim2a} and \eqref{eq:Rgamma4} we find
\begin{equation} \label{eq:claim2b}
	\sum_{n \in S_1^*} \frac{1}{p_n^2} \leq 
		\frac{2 (\gamma^2-1)}{R} + \frac{4}{R^2} \leq \frac{3(\gamma^2-1)}{R}. 
	\end{equation}

If $n \in S_2^* \cup S_3^*$ then $n \geq \lceil \gamma R \rceil + 1$ 
by \eqref{eq:setSjstar}, and both $n/p_n$ and $-n/p_{-n}$ lie between 
$1/\gamma$ and $\gamma$ by \eqref{eq:Rgamma2}. 
Then
\[ \left|\frac{1}{p_n} + \frac{1}{p_{-n}} \right| \leq 
	\frac{\gamma - 1/\gamma}{n}  < \frac{\gamma^2-1}{n} \]
and therefore
\begin{align} \nonumber 
	\sum\limits_{n \in S_2^* \cup S_3^*} 
	  \left( \frac{1}{p_n} + \frac{1}{p_{-n}} \right)^2 
	 & \leq \sum_{n = \lceil \gamma R \rceil + 1}^{\infty}  
	 \left(\frac{\gamma^2-1}{n} \right)^2 \\ \nonumber
	 & \leq (\gamma^2-1)^2 \int_{\gamma R}^{\infty} \frac{dx}{x^2} \\
	 & \leq \frac{(\gamma^2-1)^2}{R} \nonumber \\
	 & \leq \frac{3(\gamma^2-1)}{R}. \label{eq:claim2c}
\end{align}
In the last step we used $\gamma^2-1 <3$.
Using \eqref{eq:claim2b} and \eqref{eq:claim2c}, we find the
inequality
\[ \sum\limits_{n\in S_1^*} \frac{1}{p_n^2}
		+ \sum\limits_{n \in S_2^* \cup S_3^*} 
	  \left( \frac{1}{p_n} + \frac{1}{p_{-n}} \right)^2
	  \leq \frac{6(\gamma^2-1)}{R}. \]
Since $2R \leq \gamma N$ with $\gamma <2$, we have
\[ 2R^2 \frac{6 (\gamma^2-1)}{R}
	\leq 12N (\gamma^2-1), \]
and therefore 
\begin{equation} \label{eq:claim2} 
	\frac{2R^2}{1-\beta}  
		\left(\sum\limits_{n\in S_1^*} \frac{1}{p_n^2}
		+ \sum\limits_{n \in S_2^* \cup S_3^*} 
	  \left( \frac{1}{p_n} + \frac{1}{p_{-n}} \right)^2\right)
	  \leq 12 N \frac{\gamma^2-1}{1-\beta}.
	  \end{equation}

Having \eqref{eq:claim1} and \eqref{eq:claim2}
we can continue with our lower bound for $w_R(t)$. From \eqref{eq:wRlower2} 
we obtain
\begin{equation}
 \label{eq:wRlower3}
 	w_R(t) \geq e^{-\gamma^2 N V(t) - 12N \frac{\gamma^2-1}{1-\beta} t^2}
 	e^{-N \varepsilon_R t}.
\end{equation}

From the definition of $V$ in \eqref{eq:Vdef}, we find the Maclaurin series
\begin{equation} \label{eq:Vseries} 
	V(t) = t^2 + \sum_{k=2}^{\infty} \frac{t^{2k}}{k(2k-1)}
	\end{equation}
from which it is clear that $t^2 \leq V(t)$. Thus by \eqref{eq:gamma},
\begin{align*} 
	\gamma^2 N V(t) + 12N \frac{\gamma^2-1}{1-\beta} t^2
	& \leq \left(\gamma^2 + 12 \frac{\gamma^2-1}{1-\beta} \right) N V(t) 
	  \leq \alpha^2 N V(t).
	 \end{align*}
From the Maclaurin series \eqref{eq:Vseries} it is also clear that 
$\alpha^2 V(t) \leq V(\alpha t)$, since $\alpha > 1$. Hence
\[	\gamma^2 N V(t) + 12N \frac{\gamma^2-1}{1-\beta} t^2 \leq N V(\alpha t), \]
and using this in \eqref{eq:wRlower3}, we 
obtain the required inequality \eqref{eq:wRlowerest}. 
\end{proof}

\section{Proof of Proposition \ref{prop2}} \label{sec:proofprop2}

\subsection{Proof of Proposition \ref{prop2} (a)}

\begin{proof}
The inequalities in \eqref{eq:ass1a} follow from Proposition \ref{prop1} and
the definitions \eqref{eq:wRplus}-\eqref{eq:wRminus}.

The limits in \eqref{eq:ass1b} are also almost immediate. From \eqref{eq:wRplus}
we have
\begin{equation} \label{eq:wRpluslimit} 
	w_{R,\alpha}^+ \left(\frac{x}{R}\right)
	= \left(1- \frac{x^2}{R^2}\right)^{-1/2}
		\exp\left( -N  V\left( \frac{x}{\alpha R} \right) 
			- \varepsilon_R \frac{Nx}{R} \right). 
			\end{equation}
Clearly $\left(1- \frac{x^2}{R^2}\right)^{-1/2} \to 1$ as $R \to \infty$,
and by \eqref{eq:epsR}
\[ \varepsilon_R \frac{Nx}{R} = 2 x\sum_{|p_n| > R} \frac{1}{p_n}, \]
which tends to $0$ as $R \to \infty$. 

From  \eqref{eq:Vseries} we see that 
$V(t) = O(t^2)$ as $t \to 0$ and therefore 
\[ N V\left(\frac{x}{\alpha R}\right) = N O\left(\left(\frac{x}{\alpha R}\right)^2\right) 
= \frac{x^2}{\alpha^2} O\left( \frac{N}{R^2} \right) \]
and this also tends to $0$ as $R \to \infty$.
Thus \eqref{eq:wRpluslimit} tends to $1$ as $R \to \infty$, and
the convergence is uniform for $x$ in a compact set.
This proves the second limit in \eqref{eq:ass1b}. The proof for
the first limit is the same.
\end{proof}

\subsection{Preliminaries to the proof of Proposition \ref{prop2} (b)}

To establish the limits \eqref{eq:ass2a} for the weights 
$w^{\pm}_{R,\alpha}$ we need a few concepts of logarithmic potential theory with 
external fields \cite{SaffTotik}. 

Let $W$ be a continuous function on a compact interval $I$ of the real line. 
Among all probability measures $\mu$ with $\supp(\mu)\subset I$, there exists a 
unique measure $\mu_W$, called the equilibrium measure in the presence of the 
external field $W$, which minimizes the weighted energy
\[
	\iint\log\frac{1}{|x-t|}d\mu(x)d\mu(t)+\int W(t)d\mu(t).
\] 
If we denote by $U^\mu$ the logarithmic potential of a measure $\mu$, that is,
\begin{equation} \label{eq:Umu}
	U^\mu(x)=\int\log\frac{1}{|x-t|}d\mu(t),
\end{equation}
then $\mu_W$  is characterized by the property that 
\begin{equation} \label{eq:Umuconditions}
\begin{aligned}
	2 U^{\mu_W}(x)+ W(x) \geq{} & \ell, \qquad x\in I\setminus\supp(\mu_W),\\
	2 U^{\mu_W}(x)+ W(x) = {} &\ell,\qquad x\in\supp(\mu_W),
\end{aligned}
\end{equation}
for some constant $\ell$ that depends on $W$.

We will specifically consider external fields of the form 
\begin{align}\label{eq:Valphaeps}
V_{\alpha, \varepsilon}(x) = V\left(\frac{x}{\alpha}\right)+\varepsilon x,
\end{align}
where $\alpha\geq 1$ and $\varepsilon$ are real numbers, and $V$ is given by 
\eqref{eq:Vdef} as before. We use $\psi_{\alpha, \varepsilon}$ to denote
the density of the equilibrium measure with external field \eqref{eq:Valphaeps}
on the interval $[-1,1]$. We already noted in \eqref{eq:psiV} that
\begin{equation} \label{eq:psi10} 
	\psi_{1,0}(x) = \frac{1}{2}, \qquad x \in [-1,1]. 
	\end{equation}

For $\alpha > 1$ and $\varepsilon$ close to zero we can also calculate the
density explicitly. We write
\begin{equation} \label{eq:epsalpha} 
	\varepsilon_{\alpha} := 2 \sqrt{1-\alpha^{-2}}. 
	\end{equation}

\begin{proposition}\label{form:equilibrium-measures}
Let $\alpha>1$ and $\varepsilon \in [-\varepsilon_{\alpha}, \varepsilon_{\alpha}]$.
Then the equilibrium measure $\mu_{V_{\alpha,\varepsilon}}$ in the presence
of the external field \eqref{eq:Valphaeps} has full support $[-1,1]$ with density given
by
\begin{equation} \label{eq:psialphaeps} 	
	\psi_{\alpha, \varepsilon}(x) = 
	\frac{2\sqrt{\alpha^2-1} -\alpha \varepsilon x}{2\alpha\pi \sqrt{1-x^2}}
			+ \frac{1}{\alpha \pi} \arctan\left( \frac{\sqrt{1-x^2}}{\sqrt{\alpha^2-1}} \right),
			\qquad x \in (-1,1).	
\end{equation}
\end{proposition}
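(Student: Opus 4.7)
The strategy is to verify that the candidate measure $\mu_{\alpha,\varepsilon}(dx) = \psi_{\alpha,\varepsilon}(x)\,dx$ given by \eqref{eq:psialphaeps} is a probability measure on $[-1,1]$ satisfying the Euler--Lagrange characterization \eqref{eq:Umuconditions} with equality on all of $[-1,1]$. By uniqueness of the equilibrium measure in a given external field, this will identify $\mu_{V_{\alpha,\varepsilon}}$ with $\mu_{\alpha,\varepsilon}$; the full-support claim will then follow because $\psi_{\alpha,\varepsilon}$ is strictly positive on $(-1,1)$.

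The key observation is the decomposition $\mu_{\alpha,\varepsilon} = \widehat{\nu}_\alpha + \tfrac{\varepsilon}{2}\lambda$, where $\widehat{\nu}_\alpha$ is the balayage onto $[-1,1]$ of the uniform probability measure $\nu_\alpha$ on $[-\alpha,\alpha]$, and $\lambda$ is the signed measure on $[-1,1]$ with density $-t/(\pi\sqrt{1-t^2})$. To identify this decomposition with \eqref{eq:psialphaeps}, I would compute $\widehat{\nu}_\alpha$ explicitly using the classical density $\sqrt{s^2-1}/(\pi|s-x|\sqrt{1-x^2})$ for the balayage of $\delta_s$ ($|s|>1$) onto $[-1,1]$; integrating over $s\in[-\alpha,-1]\cup[1,\alpha]$ against $ds/(2\alpha)$, exploiting the symmetry $s\mapsto -s$ to combine the two pieces, and then using the substitution $s=\sqrt{1+v^2}$, the resulting integral is elementary and, after applying $\arctan u + \arctan(1/u) = \pi/2$, produces precisely the $\varepsilon=0$ part of $\psi_{\alpha,\varepsilon}$ once the already-supported piece $\nu_\alpha|_{[-1,1]}$ is added. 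The term $\tfrac{\varepsilon}{2}\lambda$ accounts for the remaining $-\varepsilon x/(2\pi\sqrt{1-x^2})$ summand. Total mass is $1+0=1$ because balayage preserves mass and $\lambda$ is odd, and non-negativity on $[-1,1]$ reduces, via $|\varepsilon|\le\varepsilon_\alpha = 2\sqrt{\alpha^2-1}/\alpha$, to the elementary inequality $2\sqrt{\alpha^2-1}-\alpha\varepsilon x \ge 0$ for $|x|\le 1$, the arctan contribution being manifestly positive.

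The Euler--Lagrange equality then follows from three standard potential-theoretic identities: (i) $V(x/\alpha) = -2U^{\nu_\alpha}(x) - 2\log\alpha$, immediate from $V(t) = \int_{-1}^1 \log|t-s|\,ds$ after scaling $s\mapsto s/\alpha$; (ii) the defining property of the balayage, $U^{\widehat{\nu}_\alpha}(x) = U^{\nu_\alpha}(x) + c_\alpha$ for $x\in[-1,1]$; and (iii) $U^\lambda(x) = -x + C$ on $(-1,1)$, obtained by differentiating and evaluating $\mathrm{p.v.}\int_{-1}^1 t/(\pi\sqrt{1-t^2}(x-t))\,dt = -1$ through the decomposition $t/(x-t) = -1 + x/(x-t)$ combined with the classical identity $\mathrm{p.v.}\int_{-1}^1 dt/(\pi\sqrt{1-t^2}(x-t)) = 0$. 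Substituting these into $2U^{\mu_{\alpha,\varepsilon}}(x)+V_{\alpha,\varepsilon}(x)$, the $\pm 2U^{\nu_\alpha}(x)$ and $\pm\varepsilon x$ terms cancel, leaving the constant $\ell = 2c_\alpha + \varepsilon C - 2\log\alpha$ on all of $[-1,1]$.

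The main obstacle is the explicit balayage calculation yielding the arctan term; once that closed form is in hand, the rest of the verification is essentially bookkeeping. A fully equivalent alternative would be a direct Plemelj analysis of the Stieltjes transform $F(z) = \int_{-1}^1 \psi_{\alpha,\varepsilon}(t)/(z-t)\,dt$, checking $F_+(x)+F_-(x) = V'_{\alpha,\varepsilon}(x)$ on $(-1,1)$ together with $F(z) = 1/z + O(1/z^2)$ at infinity, but the balayage viewpoint makes the role of the parameters $\alpha$ and $\varepsilon$ considerably more transparent and handles the threshold case $|\varepsilon|=\varepsilon_\alpha$ uniformly.
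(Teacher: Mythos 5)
Your proposal is correct and takes essentially the same route as the paper: decompose $\mu_{\alpha,\varepsilon}$ as the balayage onto $[-1,1]$ of the uniform measure on $[-\alpha,\alpha]$ (giving the $\varepsilon=0$ density via the point-mass balayage formula and the substitution $s^2=1+v^2$), plus $\tfrac{\varepsilon}{2}$ times the odd signed measure with arcsine-type density, whose potential is linear; then verify the Euler--Lagrange conditions by linearity. The paper writes the perturbing measure as $-\tfrac{\varepsilon}{2}\nu$ with $d\nu=\tfrac{x\,dx}{\pi\sqrt{1-x^2}}$ (your $\lambda=-\nu$), but the structure of the argument, including the mass and positivity checks under $|\varepsilon|\le\varepsilon_\alpha$, is the same.
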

\begin{proof}
Note that the density \eqref{eq:psialphaeps} is indeed positive on $(-1,1)$
because of the assumption $|\varepsilon| \leq \varepsilon_{\alpha}$, see \eqref{eq:epsalpha}.
The density is zero at one of the endpoints in case  $|\varepsilon| = \varepsilon_{\alpha}$.

We first consider $\varepsilon = 0$. 

The equilibrium measure for the external field $V(x/\alpha)$ on the bigger
interval $[-\alpha, \alpha]$ is a multiple of the Lebesgue measure, namely $\frac{1}{2\alpha} dx$
restricted to $[-\alpha, \alpha]$. This simply follows from \eqref{eq:psi10}  by rescaling.
The equilibrium measure $\mu_{V_{\alpha,0}}$ is then the balayage of 
$\frac{1}{2\alpha} dx$ onto  $[-1,1]$.
The balayage of a point mass $\delta_t$ onto $[-1,1]$ is the measure with density
\[ 
\frac{1}{\pi \sqrt{1-x^2}} \frac{\sqrt{t^2-1}}{|t-x|} dx,
		\qquad \text{for } t > 1 \text{ or } t < -1. \]
It follows that
\begin{align*} 
	\psi_{\alpha,0}(x) & = \frac{1}{2\alpha} + \frac{1}{2\alpha\pi \sqrt{1-x^2}}
	\left(\int_{-\alpha}^{-1} + \int_1^{\alpha} \right) \frac{\sqrt{t^2-1}}{|t-x|} dt, \\
	& = \frac{1}{2\alpha}  + \frac{1}{\alpha\pi \sqrt{1-x^2}}
	\int_1^\alpha   \frac{t \sqrt{t^2-1} }{t^2-x^2} \, dt, \qquad x \in [-1,1]. 
	\end{align*}
A change of variables $t^2 = s^2 + 1$ leads to
\begin{align*} 
	\psi_{\alpha,0}(x) = \frac{1}{2\alpha}  + \frac{1}{\alpha\pi \sqrt{1-x^2}}
	\int_0^{\sqrt{\alpha^2-1}}   \frac{s^2 }{s^2 + 1 -x^2} ds, \qquad x \in [-1,1].  
	\end{align*}
	The integral can be evaluated, and the result is
\begin{equation} \label{eq:psialpha0} 
	\psi_{\alpha,0}(x) = \frac{1}{2\alpha}+ \frac{1}{\alpha\pi \sqrt{1-x^2}}
	\left[ \sqrt{\alpha^2-1} - 
	\sqrt{1-x^2} \arctan\left(\frac{\sqrt{\alpha^2-1}}{\sqrt{1-x^2}} \right) \right],
	\end{equation}
for $x \in [-1,1]$. Using $\arctan(y) + \arctan(1/y)  = \pi/2$, we see that \eqref{eq:psialpha0}
leads to \eqref{eq:psialphaeps} for the case $\varepsilon=0$.

Let now $\nu$ be the signed measure on $[-1,1]$ given by
\begin{equation} \label{eq:nu}
	d\nu=\frac{ x}{\pi \sqrt{1-x^2}}dx.
\end{equation}
We claim that
\begin{equation} \label{eq:Unu}
	U^{\nu}(x) = x, \qquad x \in [-1,1].
\end{equation}

To prove \eqref{eq:Unu} we recall that the measure $\omega$ with density
\[ \frac{d\omega}{dx} = \frac{1}{\pi\sqrt{1-x^2}}, \qquad x \in [-1,1] \]
is the equilibrium measure of the interval $[-1,1]$. Thus $U^{\omega}$
is constant on $[-1,1]$ and therefore $\frac{d}{dx} U^{\omega}(x) = 0$ for
$x \in (-1,1)$. This means
\[ 	 -\frac{1}{\pi} \dashint_{-1}^1 \frac{1}{x-t }\frac{1}{\sqrt{1-t^2}}dt=0,
 	 	\qquad x \in [-1,1],
\]
where $\dashint$ denotes the Cauchy principal value.
Hence
\begin{align*}
	\frac{d}{dx} U^\nu(x) & = 
	-\frac{1}{\pi}\dashint_{-1}^{1}\frac{1}{x-t}\frac{t}{\sqrt{1-t^2}}dt \\
	& = \frac{1}{\pi}\dashint_{-1}^{1}\frac{1}{x-t}\frac{x-t-x}{\sqrt{1-t^2}}dt \\
	& = \int_{-1}^1 d\omega(t)  + x \frac{d}{dx} U^{\omega}(x) \\
	& = 1.
\end{align*} 
Thus $U^{\nu}(x) = x + c$, $x \in [-1,1]$, for some constant $c$.
By symmetry, it is clear that $U^\nu(0)=0$, and therefore $c=0$. 	
Hence our claim \eqref{eq:Unu} follows.

Then $\mu_{\alpha,0} - \frac{\varepsilon}{2} \nu$
is a measure with density \eqref{eq:psialphaeps}. It is a positive measure
on $[-1,1]$ if $|\varepsilon| \leq \varepsilon_{\alpha}$, and
then it is a probability measure since $\int d\nu = 0$ and $\int d\mu_{\alpha,0} = 1$.
It satisfies for some constant $\ell$,
\begin{align*}  
	2 U^{\mu_{\alpha,0} - \frac{\varepsilon}{2} \nu}(x)
	& = 2 U^{\mu_{\alpha,0}}(x) - \varepsilon U^{\nu}(x) \\
	& = - V\left(\frac{x}{\alpha}\right) + \ell - \varepsilon x, \qquad x \in [-1,1].
\end{align*}
Because of the variational conditions \eqref{eq:Umuconditions} this means that 
\[ \mu_{\alpha, \varepsilon} = \mu_{\alpha,0} - \frac{\varepsilon}{2} \nu\]
and the density \eqref{eq:psialphaeps} follows.
\end{proof}

Note that for $|\varepsilon| \leq \varepsilon_{\alpha}$,
\begin{equation} \label{eq:psialphaat0} 
	\psi_{\alpha, \varepsilon}(0) = \psi_{\alpha,0}(0) = 
	\frac{1}{2\alpha} + \frac{\sqrt{\alpha^2-1} - \arctan(\sqrt{\alpha^2-1})}{\alpha \pi}, 
\end{equation}
which can be seen from \eqref{eq:psialphaeps} and 
\eqref{eq:psialpha0}. Also note that $\psi_{\alpha, \varepsilon}(0) \to \frac{1}{2}$
as $\alpha \to 1+$.

Proposition \ref{prop2} (b) will follow from the following universality result
for the weights $(1-x^2)^{\pm 1/2} e^{-N V_{\alpha, \varepsilon}(x)}$.

\begin{proposition}\label{prop4}
Let $\alpha > 1$ and $|\varepsilon| < \varepsilon_{\alpha}$,
where $\varepsilon_{\alpha}$ is given by \eqref{eq:epsalpha}.
Let $\psi_{\alpha,\varepsilon}$ be the density of the 
equilibrium measure in $[-1,1]$ for the external field \eqref{eq:Valphaeps} on $[-1,1]$.

Then for each $x_0 \in (-1,1)$,
\begin{multline} \label{kernel:exponential-weights}
	\frac{1}{\psi_{\alpha, \varepsilon}(x_0) N} 
	K_{N}\left(x_0+\frac{x}{\psi_{\alpha, \varepsilon}(x_0) N}, 
		x_0+\frac{y}{\psi_{\alpha, \varepsilon}(x_0) N}; 
		(1-t^2)^{-1/2} e^{-N V_{\alpha, \varepsilon}(t)} \right) \\
	= \frac{\sin\pi (x-y)}{\pi (x-y)} + O\left(\frac{1}{N}\right)
\end{multline}
and
\begin{multline} \label{kernel:exponential-weights2}
	\frac{1}{\psi_{\alpha, \varepsilon}(x_0) N} 
	K_{N}\left(x_0+\frac{x}{\psi_{\alpha, \varepsilon}(x_0) N}, 
		x_0+\frac{y}{\psi_{\alpha, \varepsilon}(x_0) N}; 
		(1-t^2)^{1/2} e^{-N V_{\alpha, \varepsilon}(t)} \right) \\
	= \frac{\sin\pi (x-y)}{\pi (x-y)} + O\left(\frac{1}{N}\right)
\end{multline}
as $N \to \infty$.

The $O$-terms in \eqref{kernel:exponential-weights} and \eqref{kernel:exponential-weights2} 
are uniform for $x$ and $y$ in a compact subset of the real line, 
for $x_0$ in a compact subset of $(-1,1)$,
and for $\varepsilon$ in a compact subset of $(-\varepsilon_{\alpha}, \varepsilon_{\alpha})$.
\end{proposition}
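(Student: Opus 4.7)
The plan is to invoke the Deift--Kriecherbauer--McLaughlin--Venakides--Zhou (DKMVZ) Riemann--Hilbert steepest descent analysis for orthogonal polynomials with varying weights of the form $h(t)e^{-NV_{\alpha,\varepsilon}(t)}$ on $[-1,1]$, where $h(t)=(1-t^2)^{\pm 1/2}$ is a fixed Jacobi-type factor. Such analysis has become essentially standard (see \cite{DKMVZ1, KMVV}); the key point will be to check that all the hypotheses are satisfied and that every estimate depends continuously on $(\alpha,\varepsilon,x_0)$ in the asserted ranges, so that the $O(1/N)$ remainder is uniform.

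First I would verify the analyticity and regularity hypotheses. Since $\alpha>1$, the functions $\log(1\pm t/\alpha)$ are real-analytic on a complex neighborhood of $[-1,1]$, so $V_{\alpha,\varepsilon}$ is real-analytic there. By Proposition~\ref{form:equilibrium-measures} the equilibrium measure has full support $[-1,1]$, its density $\psi_{\alpha,\varepsilon}$ is strictly positive on $(-1,1)$ for $|\varepsilon|<\varepsilon_\alpha$, and restricted to any compact subset of $(-\varepsilon_\alpha,\varepsilon_\alpha)$ it is bounded away from zero on any compact subset of $(-1,1)$. Thus the weight is ``regular'' in the Deift--Kriecherbauer--McLaughlin sense with no band--gap transitions. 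The endpoints $\pm 1$ are hard edges with square-root vanishing of $\psi_{\alpha,\varepsilon}-\tfrac{1}{\pi\sqrt{1-x^2}}(\text{const})$; the Jacobi factor $(1-t^2)^{\pm 1/2}$ in $h$ will only affect the local parametrix there.

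Next I would carry out the standard sequence of transformations $Y\mapsto T\mapsto S\mapsto R$: normalization at infinity with the $g$-function $g(z)=\int\log(z-s)\,d\mu_{\alpha,\varepsilon}(s)$, opening of lenses around $[-1,1]$ using the upper/lower factorization of the jump matrix, construction of a global Szeg\H o-type outside parametrix whose symbol incorporates $h$, and construction of Bessel-type local parametrices at $\pm 1$. A small-norm argument then gives $R(z)=I+O(1/N)$ uniformly for $z$ in any compact set of $\mathbb{C}$ bounded away from $\pm 1$. From the standard Christoffel--Darboux representation of the reproducing kernel in terms of the RH solution, evaluating near the bulk point $x_0\in(-1,1)$ and using the local trigonometric form of the outside parametrix produces the sine kernel at the equilibrium spacing $\psi_{\alpha,\varepsilon}(x_0)$, which yields exactly the scaling in \eqref{kernel:exponential-weights} and \eqref{kernel:exponential-weights2}. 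The two cases $h(t)=(1-t^2)^{\pm 1/2}$ differ only through the outside parametrix but give the same sine kernel at interior points, since the Szeg\H o symbol contributes only a non-vanishing multiplicative factor that cancels between $\varphi_N(x)\varphi_{N-1}(y)-\varphi_{N-1}(x)\varphi_N(y)$.

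The main obstacle is not any single technical step but the bookkeeping of uniformity. One must show that the opening of lenses, the size of neighborhoods of $\pm 1$ where Bessel parametrices are used, the minimum of $\psi_{\alpha,\varepsilon}$ on the relevant compact set, and the estimates on the jump matrices for $R$ are all uniform in $(\alpha,\varepsilon,x_0)$ as $\varepsilon$ varies over a compact subset of $(-\varepsilon_\alpha,\varepsilon_\alpha)$ and $x_0$ over a compact subset of $(-1,1)$. Since $V_{\alpha,\varepsilon}$, $\psi_{\alpha,\varepsilon}$ and $\mu_{\alpha,\varepsilon}$ depend continuously (in fact smoothly) on $\varepsilon$ by the explicit formula \eqref{eq:psialphaeps}, and the distance from $x_0$ to $\pm 1$ and from $|\varepsilon|$ to $\varepsilon_\alpha$ are bounded below, each estimate carries through uniformly. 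The resulting asymptotic expansion of $R$ has leading error $O(1/N)$ uniformly, which transfers to the $O(1/N)$ remainder in \eqref{kernel:exponential-weights}--\eqref{kernel:exponential-weights2}.
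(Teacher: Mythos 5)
Your plan follows the same Riemann--Hilbert steepest descent framework as the paper and would ultimately succeed, but it is heavier than what the paper actually does, and it misses the key structural simplification that motivated the choice of weights. You propose Bessel-type local parametrices at $\pm 1$; the paper deliberately avoids these. The prefactors $(1-t^2)^{\pm 1/2}$ were chosen precisely because for Jacobi exponents in $\{-\tfrac12,\tfrac12\}$ the global (outside) parametrix already matches the solution near $\pm 1$, so no endpoint analysis is needed at all -- the paper cites \cite{KuijlaarsNotes} for exactly this observation, and the appendix uses only one contour (an ellipse $\Gamma_\tau$) and the explicit $2\times 2$ matrix $M$ in \eqref{eq:defM}; the final transformation $R = SM^{-1}$ then has a jump \emph{only} on $\Gamma_\tau$, not on any small circles around $\pm 1$. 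If you do bring in Bessel parametrices, you take on extra work and also the extra obligation of checking their $\varepsilon$-uniformity, which the paper never has to confront. On the uniformity question -- which you correctly identify as the real content of the proposition -- your appeal to ``everything depends continuously on $\varepsilon$'' is the right intuition but is not a proof; the paper replaces it with a concrete estimate: writing $\xi(z)=\int_1^z \rho_{\alpha,\varepsilon}(s)(s^2-1)^{-1/2}\,ds$, one shows on $\Gamma_\tau$ that $\Re\,\xi(z) \geq (\varepsilon_\alpha - |\varepsilon|)\log\tau - M(\tau-1)^2$, which stays bounded away from $0$ once $|\varepsilon|$ is confined to a compact subset of $(-\varepsilon_\alpha,\varepsilon_\alpha)$. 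That single inequality is what makes the jump for $R$ uniformly $I+O(e^{-cN})$, and hence the $O(1/N)$ in the proposition uniform; you should either extract an estimate of this kind or argue explicitly that every constant in the small-norm argument can be chosen independently of $\varepsilon$ on the relevant compact set, rather than asserting it in passing.
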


The universality results \eqref{kernel:exponential-weights} and \eqref{kernel:exponential-weights2}
are well-known and they are in fact known under much more general conditions.
The interest for our present purposes lies in the fact that the $O$-terms are
uniform in $\varepsilon$.

Deift et al.~\cite{DKMVZ1} proved universality results for weights $e^{-nW}$ on the
real line and they developed the Riemann-Hilbert method for orthogonal polynomials \cite{Deift}
in order to do so. For nonvarying weights on $[-1,1]$ that are real analytic modifications
of a Jacobi weight, i.e., weights of the form
\begin{equation} \label{eq:Jacobitype} 
	h(x) (1-x)^{\alpha} (1+x)^{\beta} \end{equation}
where $h$ is real analytic and nowhere vanishing on $[-1,1]$, 
the universality was shown in \cite{KMVV}. 
In \cite{KuijlaarsNotes} it was emphasized that the Riemann-Hilbert method does not 
require any endpoint analysis for weights of the form  \eqref{eq:Jacobitype} 
with $\alpha, \beta \in \{-\frac{1}{2}, \frac{1}{2}\}$. 

Varying weights $h(x) (1-x)^{\alpha} (1+x)^{\beta} e^{-nW(x)}$ on the interval 
$[-1,1]$ have not been considered explicitly, as far as we are aware, but the 
Riemann-Hilbert analysis is very similar to the one in \cite{KMVV},
provided that $W$ is real analytic in a neighborhood of $[-1,1]$ with an
equilibrium measure having a density $\psi_W$ that is nowhere vanishing  on $(-1,1)$ 
and  has inverse square root behavior at both endpoints $\pm 1$.
Again, if $\alpha, \beta \in \{ - \frac{1}{2}, \frac{1}{2} \}$, then no
endpoint analysis in the Riemann-Hilbert method is needed.
This is a technical advantage, and it is the reason for the prefactors
$(1-x^2)^{\pm 1/2}$ in the weights in \eqref{kernel:exponential-weights} and
\eqref{kernel:exponential-weights2}.

In another development, Levin and Lubinsky  \cite[Theorem 1.1]{LevinLubinsky} 
established the sine kernel universality limits for very general exponential weights 
$h e^{-n W}$, with fixed $h$ and $W$ that do not depend on $n$.
We cannot apply this result directly, since we need uniformity in $\varepsilon$. 
In the same paper \cite[Theorem 1.2]{LevinLubinsky}, universality for exponential 
weights $h e^{-n W_n}$ with varying external fields $W_n$ is established 
under the assumption that  some control on the behavior of the Christoffel functions 
(i.e., the kernel on the diagonal) is known a priori. 
The behavior of Christoffel functions has been obtained by Totik 
\cite[Theorems 1.1 and 1.2]{Totik2000}, but again uniformity in parameters was not
included. 

Rather than trying to adapt the arguments of Totik to our situation, 
which deals with a very specific weight, we rely on the Riemann-Hilbert method
to prove  Proposition \ref{prop4}. The application of this method is standard by now,
but we will go through the analysis in order to verify the uniformity in $\varepsilon$,
see appendix \ref{sec:RHanalysis}.

\subsection{Proof of Proposition \ref{prop2} (b)}

We finally  show how part (b) of Proposition \ref{prop2} 
follows from Proposition  \ref{prop4}. 
\begin{proof}
Specializing \eqref{kernel:exponential-weights} to $x_0=0$, putting
\begin{equation} \label{eq:calphaplus} 
	c_{\alpha}^+ = \psi_{\alpha,\varepsilon}(0)=\psi_{\alpha,0}(0),
	\end{equation}
see \eqref{eq:psialphaat0}, and changing $x$ and $y$ to $ c_{\alpha}^+x$ and 
$c_{\alpha}^+ y$, we obtain
\[ 	\lim_{N\to\infty}	\frac{1}{N} 
	K_{N}\left(\frac{x}{N}, \frac{y}{N}; 
		(1-t^2)^{-1/2} e^{-N V_{\alpha, \varepsilon}(t)} \right) 
	= \frac{\sin\pi c_{\alpha}^+ (x-y)}{\pi (x-y)} 
\] 
uniformly for $\varepsilon$ in compact subsets of $(-\varepsilon_{\alpha}, \varepsilon_{\alpha})$.
By the uniformity, we obtain the same limit, if we let $\varepsilon = \varepsilon_R$ 
and let both $N, R \to \infty$ such that $\varepsilon_R \to 0$. Hence \eqref{eq:ass2a}
holds for the $+$-case. 

The weight $w_{R,\alpha}^-$ is supported on $[-\alpha^{-2}, \alpha^{-2}]$, see \eqref{eq:wRminus}.
After rescaling to $[-1,1]$ it is 
\begin{align*} 
	w_{R,\alpha}^-(\alpha^{-2} t) & = 
	 (1-t^2)^{1/2} e^{-N(V(\frac{t}{\alpha}) +\widetilde{\varepsilon}_R t)},
	\qquad t \in [-1,1],
\end{align*}
where 
\[
\widetilde{\varepsilon}_R=\alpha^{-2}\varepsilon_R. 
\]

The kernel transforms as
\begin{equation} \label{eq:KNwminus1} 
	K_N(x,y, w_{R,\alpha}^-) = 
	\alpha^2  K_N\left( \alpha^2 x, \alpha^2y; (1-t^2)^{1/2} 
	e^{-N V_{\alpha, \widetilde{\varepsilon}_R }(t)} \right).
	\end{equation}

From \eqref{kernel:exponential-weights2} with $x_0=0$, $x$ and $y$ replaced by
$\psi_{\alpha,\varepsilon}(0) x\alpha^2$ and $\psi_{\alpha,\varepsilon}(0) y\alpha^2$, we obtain
\[ 	\lim_{N\to\infty}	\frac{\alpha^2}{ N} 
	K_{N}\left(\frac{x\alpha^2}{ N}, \frac{y\alpha^2}{ N}; 
		(1-t^2)^{1/2} e^{-N V_{\alpha, \varepsilon}(t)} \right) 
	= \frac{\sin\pi c_{\alpha}^- (x-y)}{\pi (x-y)} 
\] 
uniformly for $\varepsilon$ in compact subsets of 
$(- \varepsilon_{\alpha}, \varepsilon_{\alpha})$,
where
\begin{equation} \label{eq:calphaminus} 
	c_{\alpha}^- = \alpha^2\psi_{\alpha,0}(0). 
	\end{equation}
Because of the uniformity in $\varepsilon$, we get 
\begin{equation} \label{eq:KNwminus2} 	
	\lim_{R\to\infty}	\frac{\alpha^2}{ N} 
	K_{N}\left(\frac{x\alpha^2}{N}, \frac{y\alpha^2}{ N}; 
		(1-t^2)^{1/2} e^{-N V_{\alpha, \widetilde{\varepsilon}_R }(t)} \right) 
	= \frac{\sin\pi c_{\alpha}^- (x-y)}{\pi (x-y)} 
\end{equation} 
if both $R, N \to \infty$ and $\widetilde{\varepsilon}_R \to 0$.
Combining \eqref{eq:KNwminus1} and \eqref{eq:KNwminus2} we obtain \eqref{eq:ass2a}
for the $-$-case as well.

The limits \eqref{eq:ass2b} are immediate from \eqref{eq:calphaplus}
and \eqref{eq:calphaminus}, since $\psi_{\alpha,0}(0) \to \frac{1}{2}$ as $\alpha \to 1+$.
\end{proof}

\section{Proof of Theorem \ref{mainthm:wR}} \label{sec:proofmain}

\begin{proof}
The reciprocal of the polynomial kernel 
$\widehat{K}_N(x,x;w)^{-1}$ (recall its definition in  \eqref{kernelwithoutweights})
is known in the theory of orthogonal polynomials as the Christoffel function.
It satisfies
\[ \frac{1}{\widehat{K}_N(x,x;w)} = 
	\min_{\substack{P(x)=1 \\ \deg P \leq N-1}}\int |P(t)|^2w(t)dt,
\]
where the minimum is taken over polynomials $P$ of degree at most 
$N-1$ that take the value $1$ at $x$.
From this extremal property and \eqref{eq:ass1a} the inequalities
\begin{equation} \label{eq:CFineq} 
	\widehat{K}_N(x,x; w_{R,\alpha}^+) \leq 
	\widehat{K}_N(x,x; w_{R}) \leq  \widehat{K}_N(x,x; w_{R,\alpha}^-) 
	\end{equation}
for $R > R_{\alpha}$, follow. 

We assume $\alpha$ is close enough to $1$ so that \eqref{eq:ass2a} holds. 
Because of 
 \eqref{kernelKn}-\eqref{kernelwithoutweights}, and \eqref{eq:ass1b}
we then also have the corresponding behavior for the $\widehat{K}_N$ kernels 
\begin{equation} \label{eq:Knhatuniv}
	\lim_{R \to\infty} \widehat{K}_N 
	\left( \frac{x}{N}, \frac{y}{N}; w_{R,\alpha}^{\pm} \right) =
	\frac{\sin \pi c_{\alpha}^{\pm} (x-y)}{\pi(x-y)}
	\end{equation}
and in particular if $y=x$,
\[ \lim_{R \to \infty} \frac{1}{N} 
	\widehat{K}_N\left( \frac{x}{N}, \frac{x}{N}; w_{R,\alpha}^{\pm} \right) 
	= c_{\alpha}^{\pm}. \]
By the inequalities \eqref{eq:CFineq} we get  
\[ c_\alpha^+ \leq \liminf_{R \to \infty} \frac{1}{N} 
	\widehat{K}_N\left( \frac{x}{N}, \frac{x}{N}; w_R \right) 
	\leq \limsup_{R \to \infty}  \frac{1}{N} 
	\widehat{K}_N\left( \frac{x}{N}, \frac{x}{N}; w_R \right) 
	\leq c_{\alpha}^-. \]
The parameter $\alpha$ can be taken arbitrarily close to $1$, and we find
because of the assumption \eqref{eq:ass2b} that
\begin{equation} \label{eq:KNdiag} 
	\lim_{R \to\infty} \frac{1}{N} 
	\widehat{K}_N\left( \frac{x}{N}, \frac{x}{N}; w_R \right) = \frac{1}{2} 
	\end{equation}
uniformly for $x$ in compacts.  
	
We next use an idea of D.S.~Lubinsky \cite[p. 919]{LubinskyAnnals} to estimate $\widehat{K}_N(x,y)$ 
for $x \neq y$ in terms of values on the diagonal. The inequality of Lubinsky is 
\begin{equation} \label{eq:DLineq}
	\frac{\left| \widehat{K}_N(x,y; w_{R,\alpha}^+) 
	- \widehat{K}_N(x,y; w_R) \right|}{\widehat{K}_N(x,x;w_R)}
	 		\leq 
	 	\sqrt{\frac{\widehat{K}_N(y,y; w_R)}{\widehat{K}_N(x,x;w_R)}}
	 	\sqrt{1 - \frac{\widehat{K}_N(x,x; w_{R,\alpha}^+)}
	 	{\widehat{K}_N(x,x; w_R)}}.
\end{equation}
In \eqref{eq:DLineq} we replace $x$ and $y$ by $x/N$ and $y/N$, respectively, 
and
let $R \to \infty$. Then the right-hand side of \eqref{eq:DLineq}
has the limit $\sqrt{1-2 c_{\alpha}^+}$ by \eqref{eq:ass2a} and 
\eqref{eq:KNdiag}. Applying \eqref{eq:KNdiag} also to the denominator
on the left-hand side of \eqref{eq:DLineq} we obtain
\begin{equation} \label{eq:KNhatcompare} 
	\limsup_{R \to \infty} \left| 
	\frac{1}{N} \widehat{K}_N \left(\frac{x}{N}, \frac{y}{N}; w_{R,\alpha}^+\right) 
	- \frac{1}{N} \widehat{K}_N\left(\frac{x}{N}, \frac{y}{N}; w_R\right) 
	\right| \leq \frac{\sqrt{1-2c_{\alpha}^+}}{2}. 
	\end{equation}

From \eqref{eq:ass1b} and \eqref{eq:ass2a} we have
\[ \lim_{R\to\infty} \frac{1}{N} \widehat{K}_N \left(\frac{x}{N}, \frac{y}{N};
	 w_{R,\alpha}^+\right) = \frac{\sin \pi c_{\alpha}^+(x-y)}{\pi(x-y)} \]
which when combined with \eqref{eq:KNhatcompare} leads to
\begin{equation} \label{eq:DLineqlimit} 
	\limsup_{R\to \infty} \left| 
	 \frac{\sin \pi c_{\alpha}^+(x-y)}{\pi(x-y)}
	- \frac{1}{N} \widehat{K}_N\left(\frac{x}{N}, \frac{y}{N}; w_R\right) 
	\right| \leq \frac{\sqrt{1-2c_{\alpha}^+}}{2}. 
	\end{equation}
Now let $\alpha \to 1$. Since $c_{\alpha}^+ \to 1/2$ as $\alpha \to 1$
we obtain from \eqref{eq:DLineqlimit}
\begin{equation} \label{eq:KNhatuniv} 
	\lim_{R \to \infty} \frac{1}{N} 
		\widehat{K}_N\left(\frac{x}{N}, \frac{y}{N}; w_R\right) 
		= 	 \frac{\sin  \frac{\pi}{2} (x-y)}{\pi(x-y)}.
	\end{equation}
 From \eqref{eq:ass1a} and \eqref{eq:ass1b} we see that
 $w_R(x/N) \to 1$ and $w_R(y/N) \to 1$ as $N \to \infty$, and
 we obtain 
\begin{equation} \label{eq:KNuniv0} 
	\lim_{N \to \infty} \frac{1}{N} 
		K_N\left(\frac{x}{N}, \frac{y}{N}; w_R\right) 
		= 	 \frac{\sin  \frac{\pi}{2} (x-y)}{\pi(x-y)},
	\end{equation}
which leads to \eqref{eq:KNuniv} if we replace $x$ by $2x$ and $y$ by $2y$.
\end{proof}

\appendix
\section{Riemann-Hilbert analysis and proof of Proposition \ref{prop4}} \label{sec:RHanalysis}.

In this section we prove Proposition \ref{prop4}  using the Riemann-Hilbert approach.
We apply the Riemann-Hilbert analysis to the weight
\[ W_N(x) = (1-x^2)^{-1/2} e^{-NV_{\alpha, \varepsilon}(x)}, \]
that appears in \eqref{kernel:exponential-weights}. The analysis for
the weight in \eqref{kernel:exponential-weights2} is similar.
The Riemann-Hilbert problem  associated with $W_N$ asks for a $2 \times 2$ matrix valued function
$Y : \mathbb C \setminus [-1,1] \to \mathbb C^{2 \times 2}$ such that
\begin{enumerate}
\item[Y1:] $Y$ is analytic on $z \in \mathbb{C} \setminus [-1,1]$ with continuous 
boundary values $Y_{\pm}$ on $(-1,1)$;
\item[Y2:] $Y_+ (x)= Y_-(x) \begin{pmatrix} 1 & W_N(x)\\ 0 & 1  \end{pmatrix}$ 
for every $x \in (-1,1)$;
\item[Y3:] $Y(z)=\left(I+O\left( \frac{1}{z} \right) \right)
\begin{pmatrix} z^N & 0 \\ 0 & z^{-N} \end{pmatrix}$ as $z \to \infty$;
\item[Y4:] $Y(z)= O\begin{pmatrix} 1 & |z\pm 1|^{-1/2} \\ 
1 & |z\pm 1|^{-1/2} \end{pmatrix}$ as $z \to \pm 1$.
\end{enumerate}

The Riemann-Hilbert problem has a unique solution, where $Y_{11}$ is the monic
orthogonal polynomial of degree $N$ for the weight $W_N$. The kernel $K_N$ is
given in terms of $Y$ as 
\begin{equation} \label{eq:kernelKNinY} 
	K_{N}(x,y; W_N)= 
	\frac{\sqrt{W_N(x) W_N(y)}}{2\pi i(x-y)} 
	\begin{pmatrix} 0&  1 \end{pmatrix}Y_+(y)^{-1}Y_+(x)
	\begin{pmatrix} 1\\ 0 \end{pmatrix},
\end{equation} 
for $x, y \in (-1,1)$.

Let $\psi_{\alpha, \varepsilon}$ be the density of the equilibrium measure
as in \eqref{eq:psialphaeps}, and let $\ell_{\alpha}$ be the constant in the variational
condition \eqref{eq:Umuconditions} for the external field $V_{\alpha, \varepsilon}$.
The constant $\ell_{\alpha}$ does not depend on $\varepsilon$. It can be explicitly
calculated, but the precise value is of no interest to us here.

Following \cite{DKMVZ1,DeiftVenakidesZhou}, we define the $g$-function
\[
	g(z):=\int_{-1}^1 \log(z-t) \psi_{\alpha, \varepsilon}(t) dt,
	\qquad z\in \mathbb{C} \setminus [-\infty,1],
\]
with the principal branch of the logarithm.  Then $T$ defined by  
\begin{equation} \label{eq:defT}
	T(z)=\begin{pmatrix} 	e^{-\ell_{\alpha}} & 0 \\	0&  e^{\ell_{\alpha}} \end{pmatrix}
	Y(z)
	\begin{pmatrix} e^{-N(g(z)-\ell_{\alpha})} & 0 \\ 0&  e^{N(g(z)-\ell_{\alpha})} \end{pmatrix}
\end{equation}
satisfies 
\begin{enumerate}
\item[T1:] $T$ is analytic on $\mathbb{C} \setminus [-1,1]$ with continuous boundary 
	values $T_{\pm}$ on $(-1,1)$;
\item[T2:] $T_+ (x)= T_-(x) J_T(x)$ for $x \in (-1,1)$, where
\begin{equation} \label{eq:JT}
	J_T(x) = 
	\begin{pmatrix} e^{-2\pi i N \int_x^1 \psi_{\alpha,\varepsilon}(s) ds} & \frac{1}{\sqrt{1-x^2}}  \\
	0 & e^{2\pi i N \int_x^1 \psi_{\alpha, \varepsilon}(s) ds}  \end{pmatrix}; 
	\end{equation}
\item[T3:] $T(z)=I+O\left( \frac{1}{z} \right)$ as $z \to \infty$;
\item[T4:] $T$ has the same behavior as $Y$ as $z \to \pm 1$. 
\end{enumerate}

Define
\begin{align} \nonumber
	\rho_{\alpha, \varepsilon}(x) & := 2 \pi \sqrt{1-x^2}\psi_{\alpha, \varepsilon}(x) \\ \label{eq:rhoalphaeps}
		&  = \varepsilon_{\alpha} -  \varepsilon x 	+ 
		\frac{2}{\alpha} \sqrt{1-x^2} \arctan \left( \frac{\sqrt{1-x^2}}{\sqrt{\alpha^2-1}}\right),
			\qquad x \in [-1,1]
\end{align}
where we recall that $\varepsilon_{\alpha}$ is given by \eqref{eq:epsalpha}.
Then $\rho_{\alpha, \varepsilon}$ has an analytic continuation to 
\[ U_{\alpha} := \mathbb C \setminus ((-\infty,-\alpha] \cup [\alpha, \infty)) \] 
which contains
the interval $[-1,1]$ in its interior. We also use $\rho_{\alpha, \varepsilon}$
for the analytic continuation. 
Let
\[ \varphi(z) = z + (z^2-1)^{1/2}, \qquad z \in \mathbb C \setminus [-1,1] \]
be the conformal map from the exterior of $[-1,1]$ to the exterior of the unit disk.
For each $\tau \in (1, \varphi(\alpha))$, the curve 
\[ \Gamma_{\tau} = \{ z \in \mathbb C \setminus [-1,1] \mid \varphi(z) = \tau \} \]
is an ellipse around $[-1,1]$ lying in $U_{\alpha}$. 
It is then clear that there is an $M$ such that
\begin{equation}  \label{eq:rhobound1}
	| \rho_{\alpha, \varepsilon}'(z)| \leq M, \qquad z \in \inte(\Gamma_{\tau})
	\end{equation} 
and $M$ is independent of $\varepsilon \in [-\varepsilon_{\alpha}, \varepsilon_{\alpha}]$, 
which is easily seen from the simple way
that $\rho_{\alpha, \varepsilon}$ depends on $\varepsilon$, see \eqref{eq:rhoalphaeps}.
Also from \eqref{eq:rhoalphaeps}
\begin{equation} \label{eq:rhobound2}
	\rho_{\alpha, \varepsilon}(x) \geq \varepsilon_{\alpha} - |\varepsilon|,
		\qquad x \in [-1,1].
\end{equation}

We let
\begin{align} \label{eq:defxi}
	\xi(z)  = \int_1^z \frac{\rho_{\alpha, \varepsilon}(s)}{ (s^2-1)^{1/2}}ds, 
	\qquad z\in U_{\alpha} \setminus (-\infty,1], 
\end{align} 
where $(s^2-1)^{1/2}$ is analytic for $ s \in \mathbb C \setminus [-1,1]$
and positive for $s > 1$. The contour of integration in \eqref{eq:defxi} is  the
line segment from $1$ to $z$ in the complex plane. It can be checked that 
$e^{\xi(z)}$ has an analytic extension to all of $U_{\alpha} \setminus [-1,1]$. 
Then  for $x \in (-1,1)$,
\[ \xi_+(x) = - \xi_-(x) = 2\pi i \int_x^1 \psi_{\alpha, \varepsilon}(s) ds ,\]
and so we can write the jump matrix \eqref{eq:JT} for $T$ in terms of $\xi_{\pm}$. 
There is a factorization
\begin{align} \label{eq:JTfactor}
\begin{split} 
	 J_T(x) ={}& \begin{pmatrix} 1 & 0 \\ i(x^2-1)^{1/2}_- e^{-N\xi_-(x)} & 1 \end{pmatrix}
	\begin{pmatrix} 0 & \frac{1}{\sqrt{1-x^2}} \\ - \sqrt{1-x^2} & 0 \end{pmatrix}\\
	&\times
	\begin{pmatrix} 1 & 0 \\ -i(x^2-1)^{1/2}_+ e^{-N\xi_+(x)} & 1 \end{pmatrix}.
\end{split}
\end{align}

We take $\tau > 1$ close to $1$ and define $S$ by 
\begin{equation} \label{eq:defS}
	S(z)=\begin{cases}
	T(z), & z\in \ext(\Gamma_\tau),\\
	T(z) \begin{pmatrix} 1 & 0 \\ i (z^2-1)^{1/2} e^{-N\xi(z)} & 1 \end{pmatrix}, 
	& z\in \inte(\Gamma_\tau)\setminus [-1,1].
\end{cases}
\end{equation}
Then $S$ satisfies the following Riemann-Hilbert problem.
\begin{enumerate}
\item[S1:] $S$ is analytic on $\mathbb{C} \setminus (\Gamma_\tau \cup [-1,1])$ 
with continuous boundary values on $\Gamma_{\tau} \cup (-1,1)$;
\item[S2:] $S_+ (x)= S_-(x) 
	\begin{pmatrix} 0 & \frac{1}{\sqrt{1-x^2}} \\ -\sqrt{1-x^2}  & 0 
	\end{pmatrix}$ for $x \in (-1,1)$, and 
	
	$S_+ = S_- J_S$ on $\Gamma_{\tau}$ where
	\begin{equation} \label{eq:JS}
	\begin{pmatrix} 1 & 0 \\ -i (z^2-1)^{1/2} e^{-N\xi(z)} & 1 \end{pmatrix};
	\end{equation}
\item[S3:] $S(z) = I + O\left(\frac{1}{z}\right)$ as $z \to \infty$; 
\item[S4:] $S$ has the same behavior as $T$ as $z \rightarrow \pm 1$. 
\end{enumerate}

Let $z \in \Gamma_{\tau}$. There is $\theta \in [0,2\pi]$ such that
$\varphi(z) = \tau e^{i\theta}$. Then by \eqref{eq:defxi}
\begin{align} \nonumber
	\xi(z) & =   \int_1^z
	 \frac{\rho_{\alpha, \varepsilon}(s) - \rho(\cos \theta)}{
	 (s^2-1)^{1/2}} ds  + \rho(\cos \theta) \int_1^z \frac{1}{(s^2-1)^{1/2}} ds \\
	& =  \int_1^z
	 \frac{\rho_{\alpha, \varepsilon}(s) - \rho(\cos \theta)}{
	 (s^2-1)^{1/2}} ds  + \rho(\cos \theta) \log \varphi(z). 
	 \label{eq:Rexi1} 
	 \end{align} 
Note that the integrand in the integral in the right-hand side of \eqref{eq:Rexi1}
is purely imaginary for $s \in [-1,1]$. For the real part of $\xi(z)$ we may then
start integrating at any other point in $[-1,1]$ instead of $1$. We choose $\cos\theta$
and since $|\varphi(z)| = \tau$, we obtain from \eqref{eq:Rexi1}
\begin{align} \label{eq:Rexi2}
	\Re \xi(z) = \Re \left( \int_{\cos \theta}^z
	 \frac{\rho_{\alpha, \varepsilon}(s) - \rho_{\alpha, \varepsilon}(\cos \theta)}{
	 (s^2-1)^{1/2}} ds \right) + \rho_{\alpha, \varepsilon}(\cos \theta ) \log  \tau .
	 \end{align}
We make the change of variables $s = \varphi^{-1}(w) = \frac{1}{2}(w + \frac{1}{w})$,
and then \eqref{eq:Rexi2} leads to
\begin{equation} \label{eq:Rexi3}
	\Re \xi(z) =	\Re \left( \int_{e^{i\theta}}^{\tau e^{i \theta}} 
	 \frac{\rho_{\alpha, \varepsilon}( \frac{1}{2} (w+\frac{1}{w})) 
	 - \rho_{\alpha, \varepsilon}(\cos \theta)}{w} dw	 \right) + 
	 \rho_{\alpha, \varepsilon}(\cos \theta) \log \tau.
	 \end{equation}
The first term in \eqref{eq:Rexi3} is $\geq -M(\tau-1)^2$ because
of \eqref{eq:rhobound1}. The second term is estimated by \eqref{eq:rhobound2} and
we find for $z \in \Gamma_{\tau}$,
\begin{equation} \label{eq:Rexi4}
	\Re \xi(z) \geq (\varepsilon_{\alpha} - |\varepsilon|) \log \tau
		 -M\left(\tau-1 \right)^2.
\end{equation}
Thus if $\varepsilon$ is in a compact subset of $(-\varepsilon_{\alpha}, \varepsilon_{\alpha})$
then there are $c > 0$ and  $\tau > 1$ such that for all $z$ in a neighborhood of $\Gamma_{\tau}$,
\begin{equation} \label{eq:Rexi5}
	\Re \xi(z) \geq c > 0.
	\end{equation}
For such a $\tau > 1$, the jump matrix on $\Gamma_{\tau}$ in the Riemann-Hilbert problem
for $S$ tends to the identity matrix at an exponential rate as $N \to \infty$.

Define now
\begin{equation} \label{eq:defM}
	M(z)=  \begin{pmatrix} 1 & -\frac{1}{i (z^2-1)^{1/2}} \\
	\frac{z- (z^2-1)^{1/2}}{2i} &  \frac{z+ (z^2-1)^{1/2}}{2 (z^2-1)^{1/2}}
\end{pmatrix}, \qquad z \in \mathbb C \setminus [-1,1].
\end{equation}
Then it is easy to verify that  
\begin{enumerate}
\item[M1:] $M$ is analytic on $\mathbb{C} \setminus [-1,1]$ with continuous boundary 
values on $(-1,1)$;
\item[M2:] $M$ has the same jump as $S$ on $(-1,1)$;
\item[M3:] $M(z) = I + O(\frac{1}{z})$ as $z \rightarrow \infty$; 
\item[M4:] $M(z)$ has the same behavior as $S(z)$ as $z \rightarrow \pm 1$. 
\end{enumerate} 

We now introduce the final transformation 
\begin{equation} \label{eq:defR}
	R(z)=S(z)M^{-1}(z), \qquad z \in \mathbb C \setminus (\Gamma_{\tau} \cup [-1,1]).
\end{equation}
Then $R$ has analytic continuation across $[-1,1]$, and 
$R$ satisfies the following Riemann-Hilbert problem.
\begin{enumerate}
\item[R1:] $R$ is analytic on $\mathbb{C} \setminus \Gamma_{\tau}$;
\item[R2:] $R_+ = R_- J_R$ on $\Gamma_{\tau}$, where $J_R = M J_S M^{-1}$, see \eqref{eq:JS};
\item[R3:] $R(z)=I+O(\frac{1}{z})$ as $z \to \infty$.
\end{enumerate}

The jump matrix $J_R$ satisfies because of \eqref{eq:JS} and \eqref{eq:Rexi5},
\begin{align} \label{eq:JR}
	J_R(z) & = M(z) \begin{pmatrix} 1 & 0 \\
	-i(z^2-1)^{1/2} e^{-N\xi(z)} & 1 \end{pmatrix} M^{-1}(z) \\
	& = I+O\left(e^{-cN} \right), \qquad \text{ as } N \to \infty, \nonumber
\end{align}
uniformly for $\varepsilon$ in a compact subset of $(-\varepsilon_{\alpha}, \varepsilon_{\alpha})$.

Then by standard estimates (see e.g.\ 
\cite[Sec. 5.9]{AptekarevSbornik} and \cite[Thm. 3.1]{KuijlaarsNotes})  we have
that 
\begin{equation} \label{eq:Rzasymp} 
R(z)=I+O(e^{-cN}) \qquad \text{ as } N \to \infty,	
\end{equation}
uniformly for $z \in \mathbb C \setminus \Gamma_{\tau}$ and uniformly
for $\varepsilon$ in compact subset of $(-\varepsilon_{\alpha}, \varepsilon_{\alpha})$.

To obtain \eqref{kernel:exponential-weights} we then follow the effect
of the transformations $Y \mapsto T \mapsto S \mapsto R$ on the 
expression \eqref{eq:kernelKNinY} for the kernel $K_N$. The computations
are similar to what is done in  \cite{Deift,DKMVZ1}, and in particular 
in  \cite[Section 2.7]{BleherLiechty}. Since \eqref{eq:Rzasymp} is
uniform in $\varepsilon$ the $O$-term in \eqref{kernel:exponential-weights} is
uniform for $\varepsilon$ in compact subsets of $(-\varepsilon_{\alpha}, \varepsilon_{\alpha})$
as well.

\begin{remark}
The proof for \eqref{kernel:exponential-weights2} follows along the same lines.
Instead of $\frac{1}{\sqrt{1-x^2}}$ one has $\sqrt{1-x^2}$ in the $12$-entry 
of the jump matrix $J_T$ in \eqref{eq:JT}.
The transformations are similar, but  slightly different, and they 
ultimately lead to an expression for the jump matrix $J_R$ as
\[ J_R(z) = M(z) \begin{pmatrix}  1 & 0 \\ i(z^2-1)^{-1/2} e^{-N \xi(z)} & 1 \end{pmatrix}
	M^{-1}(z), \]
compare with \eqref{eq:JR}, with a different $M$. Again $J_R(z) = I + O(e^{-cN})$
for $z \in \Gamma_{\tau}$, and the rest of the argument is the same.
\end{remark}

\begin{remark}
If the square root factors $(1-x^2)^{\pm 1/2}$ are missing  from the weight,
then the Riemann-Hilbert analysis becomes more complicated. Then the $12$-entry of the jump
matrix $J_T$ is $1$, and there is a factorization 
\begin{equation}
	J_T = \begin{pmatrix} 1 & 0 \\ e^{-N \xi_-} & 1 \end{pmatrix}
	\begin{pmatrix} 0 & 1 \\ -1 & 0 \end{pmatrix} 
	\begin{pmatrix} 1 & 0 \\ e^{-N \xi_+} & 1 \end{pmatrix},
	\end{equation}
compare with \eqref{eq:JTfactor}, and the jump for $T$ is  written as
\[ \left(T 
	\begin{pmatrix} 1 & 0 \\ -e^{-N \xi} & 1 \end{pmatrix} \right)_+
		= \left(T 
	\begin{pmatrix} 1 & 0 \\ e^{-N \xi} & 1 \end{pmatrix} \right)_-
	\begin{pmatrix} 0 & 1 \\ -1 & 0 \end{pmatrix}.
\]
This suggests to define 
$S = T 
	\begin{pmatrix} 1 & 0 \\ -e^{-N \xi} & 1 \end{pmatrix}$
	in the region inside $\Gamma_{\tau}$ in the upper half plane, 
and
$S = 	T 
	\begin{pmatrix} 1 & 0 \\ e^{-N \xi} & 1 \end{pmatrix}$
	in the region inside $\Gamma_{\tau}$ in the lower half plane.
However,  $S$ has a jump not only on $\Gamma_{\tau} \cup [-1,1]$, but
also on the two intervals  $[-\alpha, -1]$ and $[1,\alpha]$,
and a further analysis is necessary to handle these jumps.

\end{remark}

\section{Proof of  \eqref{eq:pnsineprocess}} \label{appendixB}
In this section we give a proof of the estimate \eqref{eq:pnsineprocess}, for which we have been unable to find an exact reference. 

We actually prove a somewhat stronger statement. For every $\varepsilon > 0$
we have almost surely
\begin{equation} \label{eq:toprove} 
	p_n = n + O(|n|^{1/2} \log^{1+\varepsilon}|n|)  \quad
	\text{ as } n \to \pm \infty. \end{equation}

Let $N(L)$ be the random variable giving the number of points
from the sine point process inside the interval $[0,L]$.
Then $\mathbb E[N(L)] = L$ and  the variance of $N(L)$ satisfies 
\begin{equation} \label{eq:Varestimate}
	\Var(N(L)) = \frac{1}{\pi^2} \log L + O(1) \qquad \text{as } L \to \infty. \end{equation}
see \cite[formula (8)]{CostinLebowitz}.
Because of the ordering (1.5) of the points $p_n$, we have
$p_n > L$ if and only if $N(L) \leq n$, and therefore if $L \geq n \geq 1$,
\begin{align} \nonumber 
	\mathbb P( p_n > L) & = \mathbb P(N(L) \leq n) \\ \nonumber
		& \leq \mathbb P( |N(L) - L| \geq L-n) \\
	& \leq \frac{\Var(N(L))}{(L-n)^2}, \label{eq:Pestimate1}
	\end{align}
by Chebyshev's inequality.
Taking $L = n + n^{1/2} \log^{1+\varepsilon} n$ for some 
fixed $\varepsilon > 0$, we easily obtain from 
\eqref{eq:Varestimate} and \eqref{eq:Pestimate1} that
\begin{equation*} 
	\mathbb P(p_n > n + n^{1/2} \log^{1+ \varepsilon} n)
	\leq \frac{1}{n \log^{1 + 2 \varepsilon} n} 
\end{equation*}
for $n$ large enough.
The series $\sum\limits_{n=2}^{\infty} \frac{1}{n \log^{1+2 \varepsilon} n}$ converges,
and so by the Borel-Cantelli lemma, we almost surely have
\[ p_n \leq n + n^{1/2} \log^{1 + \varepsilon} n 
 \qquad \text{for $n$ large enough}. \]
With a similar argument we obtain the almost sure lower bound 
$p_n \geq n - n^{1/2} \log^{1 +  \varepsilon} n$ for $n$ large enough,
and this proves \eqref{eq:toprove} for $n \to \infty$.

The a.s.\ limit for $n \to -\infty$ follows by symmetry.

\section*{Acknowledgements}
We are very grateful to Alexander Bufetov for posing the very interesting Problem
\ref{prob:Bufetov} that led to this work.   

The first author is supported by long term structural funding-Methusalem 
grant of the Flemish Government, by the Belgian Interuniversity Attraction 
Pole P07/18,  by KU Leuven Research Grant OT/12/073, and by FWO Flanders 
projects G.0864.16 and EOS 30889451.

The second author is very grateful to Arno Kuijlaars and Walter Van Assche for helping support his research stay at KU Leuven during the Fall of 2016. This stay was also partially supported by a grant from The University of Mississippi Office of Research and Sponsored Programs.


\begin{thebibliography}{00}
\bibitem{AptekarevSbornik}{A.I. Aptekarev, 
Sharp constants for rational approximations of analytic functions, 
Sbornik Math. 193 (2002), 3--72.}

\bibitem{BleherLiechty} {P. Bleher and K. Liechty,
Random Matrices and the Six-Vertex Model, CRM Monograph Series vol. 32, 
Amer. Math. Soc., Providence R.I., 2014.}

\bibitem{Bufetov14}{A.I. Bufetov, Quasi-symmetries of determinantal point processes,
Ann. Probab. 46 (2018), 956--1003.}


\bibitem{Bufetov16-1}{A.I. Bufetov, 
Rigidity of determinantal point processes with the Airy, the Bessel and the gamma kernels, 
Bull. Math. Sci. 6 (2016) 163--172.}

\bibitem{Bufetov16}{A.I. Bufetov, 
Conditional measures of determinantal point processes, arXiv:1605.01400.}

\bibitem{BufetovMay16}{A.I. Bufetov, personal communication, May 2016.}

\bibitem{BufetovDabrowskiQiu15}{A.I. Bufetov, Y. Dabrowski, and Y. Qiu,
Linear rigidity of stationary stochastic processes,
Ergodic Theory Dynam. Systems 38 (2018), 2493--2507.}

\bibitem{BufetovShirai17}{A.I. Bufetov and T. Shirai,
Quasi-symmetries and rigidity for determinantal point processes associated with 
de Branges spaces, Proc. Japan Acad. Ser. A Math. Sci. 93 (2017), 1--5.}
 
 
\bibitem{CostinLebowitz}{O. Costin and J. Lebowitz, Gaussian fluctuations
in random matrices, Phys. Review Letters  75 (1995), 69--72.}

\bibitem{Deift}{P, Deift,
Orthogonal Polynomials and Random matrices: a Riemann-Hilbert Approach, 
Courant Lecture Notes in Mathematics, vol. 3, Amer. Math. Society, 
Providence, R.I., 1999.}

\bibitem{DKMVZ1}{P. Deift, T. Kriecherbauer, K.T.-R. McLaughlin, S. Venakides, and X. Zhou,
Uniform asymptotics for polynomials orthogonal with respect to varying 
exponential weights and applications to universality questions in random  matrix theory, 
Comm. Pure Appl. Math. 52 (1999), 1335--1425.}

\bibitem{DKMVZ2}{P. Deift, T. Kriecherbauer, K.T.-R. McLaughlin, S. Venakides, and X. Zhou,
Strong asymptotics of orthogonal polynomials with respect to exponential weights, 
Comm. Pure Appl. Math. 52 (1999), 1491--1552.}

\bibitem{DeiftVenakidesZhou}{P. Deift, S. Venakides, and X. Zhou, 
New results in small dispersion KdV by an extension of the
steepest descent method for Riemann-Hilbert problems, 
Internat. Math. Res. Notices 6 (1997) 286--299.}

\bibitem{Erdos2013}{L. Erd\H{o}s,
Universality for random matrices and log-gases,
in: Current developments in mathematics 2012 (D. Jerison et al. eds.)
International Press, Somerville MA (2013), pp. 59--132.}

\bibitem{Ghosh}{S. Ghosh, 
Determinantal processes and completeness of random exponentials: the critical case, Probab. Theory Related Fields 163 (2015), 643--665.}

\bibitem{GhoshKrishnapur}{S. Ghosh and M. Krishnapur, 
Rigidity hierarchy in random point fields: random polynomials 
and determinantal processes,
arXiv: 1510.08814.}

\bibitem{GhoshPeres}{S. Ghosh and Y. Peres, 
Rigidity and tolerance in point processes: Gaussian zeroes and Ginibre eigenvalues, 
Duke Math. J. 166 (2017), 1789--1858.}

\bibitem{KuijlaarsNotes}{A.B.J. Kuijlaars, 
Riemann-Hilbert analysis for orthogonal polynomials,
in: Orthogonal Polynomials and Special Functions (E. Koelink and
W. Van Assche, eds.) Lecture Notes in Mathematics Vol. 1817 (2003), pp. 167--210.}

\bibitem{KMVV}{A.B.J. Kuijlaars, K.T-R McLaughlin, W. Van Assche, and M. Vanlessen, 
The Riemann-Hilbert approach to strong asymptotics for orthogonal polynomials on
$[-1,1]$, Adv. Math. 188 (2004) 337--398.}

\bibitem{LevinLubinsky}{E. Levin and D.S. Lubinsky, 
Universality limits in the bulk for varying measures, 
Adv. Math. 219 (2008), 743--779.}

\bibitem{LubinskyAnnals}{D.S. Lubinsky, 
A new approach to universality limits involving orthogonal polynomials, 
Ann. of Math. 170 (2009), 915--939.}

\bibitem{Lubinsky2016}{D.S. Lubinsky, 
An update on local universality limits for correlation functions generated by 
unitary ensembles, 
SIGMA 12 (2016), 078, 36 pages.}

\bibitem{Olshanski11}{G. Olshanski,
The quasi-invariance property for the Gamma kernel determinantal measure,
Adv. Math. 226 (2011), 2305--2350.}

\bibitem{Parth67}{K.R. Parthasarathy, 
Probability Measures on Metric Spaces, 
Academic Press Inc., New York-London 1967.}

\bibitem{SaffTotik}{E. B. Saff and V. Totik, 
Logarithmic Potentials with External Fields,
Grundlehren der Mathematischen Wissenschaften Vol. 316, 
Springer, Berlin, 1997.}

\bibitem{Totik2000}{V. Totik, 
Asymptotics for Christoffel functions with varying weights, 
Adv. Appl. Math. 25 (2000), 322--351.} 

\end{thebibliography}
\end{document}